 \setlist[enumerate]{itemsep=0pt,parsep=0pt,topsep=0pt,partopsep=0pt}
\definecolor{labelkey}{rgb}{0,0.08,0.45}
\definecolor{refkey}{rgb}{0,0.6,0.0}
\definecolor{Brown}{rgb}{0.45,0.0,0.05}
\definecolor{lime}{rgb}{0.00,0.8,0.0}
\definecolor{lblue}{rgb}{0.5,0.5,0.99}
\definecolor{OliveGreen}{rgb}{0,0.6,0}
\definecolor{tyrianpurple}{rgb}{0.4, 0.01, 0.24}
\newenvironment{namedproof}[1]{%
  \par\noindent\textbf{#1.}\ \ignorespaces
}{%
  \hfill\ensuremath{\blacksquare}\par
}
\colorlet{hlcyan}{cyan!30}
\def\namedlabel#1#2{\begingroup
	\def\@currentlabel{#2}%
	\label{#1}\endgroup
}
\newcommand{\seppthree}{\setlength{\itemsep}{-3pt}}
\newcommand*{\tran}{^{\mkern-1.5mu\mathsf{T}}}
\providecommand{\siff}{\Leftrightarrow}
\newcommand{\weakly}{\ensuremath{\:{\rightharpoonup}\:}}
\newcommand{\knn}{\ensuremath{{k\in{\mathbb N}}}}
\newcommand{\menge}[2]{\big\{{#1}~\big |~{#2}\big\}}
\newcommand{\fenv}[1]%
{\ensuremath{\,\overrightarrow{\operatorname{env}}_{#1}}}
\newcommand{\benv}[1]%
{\ensuremath{\,\overleftarrow{\operatorname{env}}_{#1}}}
\newcommand{\scal}[2]{\left\langle{#1},{#2}  \right\rangle}
\newcommand{\RR}{\ensuremath{\mathbb R}}
\newcommand{\NN}{\ensuremath{\mathbb N}}
\DeclareMathOperator*{\argmin}{argmin}
\newcommand{\prox}{\ensuremath{\operatorname{Prox}}}
\newcommand{\parl}{\ensuremath{\operatorname{par}}}
\newcommand{\ran}{\ensuremath{{\operatorname{ran}}\,}}
\newcommand{\zer}{\ensuremath{\operatorname{zer}}}
\newcommand{\Id}{\ensuremath{\operatorname{Id}}}
\newcommand{\dist}[2]{\operatorname{dist}\!\left(#1,#2\right)}
\newcommand{\distsq}[2]{\operatorname{dist}^2\!\left(#1,#2\right)}
\newcommand{\minimize}[2]{\ensuremath{\underset{\substack{{#1}}}{\mathrm{minimize}}\;\;#2 }}
\newcommand{\sperp}{{\scriptscriptstyle\perp}}
\crefname{equation}{}{equations}
\crefname{chapter}{Appendix}{chapters}
\crefname{item}{}{items}
\crefname{enumi}{}{}
\crefname{appsec}{Appendix}{Appendices}
\newtheorem{theorem}{Theorem}[section]
\newaliascnt{lemma}{theorem}
\newtheorem{lemma}[lemma]{Lemma}
\crefname{lemma}{Lemma}{Lemmas}
\Crefname{lemma}{Lemma}{Lemmas}
\newaliascnt{lem}{theorem}
\crefname{lem}{Lemma}{Lemmas}
\Crefname{lem}{Lemma}{Lemmas}
\newaliascnt{corollary}{theorem}
\newtheorem{corollary}[corollary]{Corollary}
\crefname{corollary}{Corollary}{Corollaries}
\Crefname{corollary}{Corollary}{Corollaries}
\newaliascnt{cor}{theorem}
\crefname{cor}{Corollary}{Corollaries}
\Crefname{cor}{Corollary}{Corollaries}
\newaliascnt{proposition}{theorem}
\newtheorem{proposition}[proposition]{Proposition}
\crefname{proposition}{Proposition}{Propositions}
\Crefname{proposition}{Proposition}{Propositions}
\newaliascnt{prop}{theorem}
\crefname{prop}{Proposition}{Propositions}
\Crefname{prop}{Proposition}{Propositions}
\newaliascnt{definition}{theorem}
\crefname{definition}{Definition}{Definitions}
\Crefname{definition}{Definition}{Definitions}
\newaliascnt{defn}{theorem}
\crefname{defn}{Definition}{Definitions}
\Crefname{defn}{Definition}{Definitions}
\newaliascnt{thm}{theorem}
\crefname{thm}{Theorem}{Theorems}
\Crefname{thm}{Theorem}{Theorems}
\newaliascnt{example}{theorem}
\newtheorem{example}[example]{Example}
\crefname{example}{Example}{Examples}
\Crefname{example}{Example}{Examples}
\newaliascnt{ex}{theorem}
\crefname{ex}{Example}{Examples}
\Crefname{ex}{Example}{Examples}
\newaliascnt{fact}{theorem}
\newtheorem{fact}[fact]{Fact}
\crefname{fact}{Fact}{Facts}
\Crefname{fact}{Fact}{Facts}
\newaliascnt{remark}{theorem}
\newtheorem{remark}[remark]{Remark}
\crefname{remark}{Remark}{Remarks}
\Crefname{remark}{Remark}{Remarks}
\newaliascnt{rem}{theorem}
\crefname{rem}{Remark}{Remarks}
\Crefname{rem}{Remark}{Remarks}
\newcommand{\boxedeqn}[1]{%
	\begin{equation}\fbox{%
		\addtolength{\linewidth}{-2\fboxsep}%
		\addtolength{\linewidth}{-2\fboxrule}%
		\begin{minipage}{\linewidth}%
			\begin{equation}#1\\begin{equation}+5mm]\end{equation}%
		\end{minipage}%
	}\end{equation}%
}
\providecommand{\norm}[1]{\|#1\|}
\providecommand{\grad}{\nabla}
\providecommand{\lip}{\beta}
\newcommand{\shift}{q}
\providecommand{\RR}{\mathbb{R}}
\providecommand{\opint}[1]{\left]#1\right[}
\providecommand{\ran}{\operatorname{ran}}
\providecommand{\parl}{\operatorname{par}}
\newcommand{\fix}{\ensuremath{\operatorname{Fix}}}
\providecommand{\Id}{\operatorname{{ Id}}}
\providecommand{\fady}{\varnothing}
\providecommand{\NN}{\mathbb{N}}
\providecommand{\fix}{\operatorname{Fix}}
\providecommand{\ran}{\operatorname{ran}}
\providecommand{\Id}{\operatorname{Id}}
\providecommand{\zer}{\operatorname{zer}}
\providecommand{\fady}{\varnothing}
\newcommand{\cran}{\ensuremath{\overline{\operatorname{ran}}\,}}
\providecommand{\ri}{\operatorname{ri}}
\providecommand{\RR}{\mathbb{R}}
\providecommand{\NN}{\mathbb{N}}
\definecolor{mybrown}{RGB}{120,75,50} 
\newcommand{\mybluebox}[1]{%
  \begingroup
  \setlength{\fboxsep}{6pt}
  \fcolorbox{mybrown}{mybrown!12}{$\displaystyle #1$}%
  \endgroup
}
\begin{document}
										
										
										
										%

\author{ 
  \small Walaa M. Moursi\thanks{Department of Combinatorics and Optimization, University of Waterloo, Waterloo, Ontario N2L~3G1, Canada. E-mail: \texttt{walaa.moursi@uwaterloo.ca}}
  \and
  \small Andrew Naguib\thanks{Department of Combinatorics and Optimization, University of Waterloo, Waterloo, Ontario N2L~3G1, Canada. E-mail: \texttt{anaguib@uwaterloo.ca}}
  \and
  \small Viktor Pavlovic\thanks{Department of Combinatorics and Optimization, University of Waterloo, Waterloo, Ontario N2L~3G1, Canada. E-mail: \texttt{viktor.pavlovic@uwaterloo.ca}}
  \and
  \small Stephen A. Vavasis\thanks{Department of Combinatorics and Optimization, University of Waterloo, Waterloo, Ontario N2L~3G1, Canada. E-mail: \texttt{vavasis@uwaterloo.ca}}
}

										\title{\textsf{\textbf{Accelerated Proximal Gradient Methods\\ in the affine-quadratic case:\\ Strong convergence and limit identification}
											}
										}
										\date{\today}
										\maketitle
\begin{abstract}
Recent works by Bo{\c t}-Fadili-Nguyen \cite{BFN2025} and by Jang-Ryu \cite{JangRyu2025,RyuX2025} address the long–standing question of iterate convergence for accelerated (proximal) gradient methods. Specifically, Bo{\c t}-Fadili-Nguyen proved weak convergence of the discrete accelerated gradient descent (AGD) iterates and, crucially, convergence of the accelerated proximal gradient (APG) method in the composite case, in infinite–dimensional Hilbert spaces; their note also documents the announcement timeline. In parallel, Jang-Ryu established point convergence both for the continuous–time accelerated flow and for the discrete AGD method in finite dimensions, with the sequence of results summarized in \cite[Section~1.2]{JangRyu2025} and in the posts \cite{RyuX2025}.
These results leave unanswered the question of which minimizer is the limit point. We show in the \emph{affine-quadratic setting}: starting
from the same initial point, the difference between the PGM and APG iterates converges \emph{weakly} to zero, so APG converges weakly to the best approximation of the starting point in the solution set; moreover, under mild conditions, APG converges strongly.
 Our results are tight: a two–dimensional example shows that this coincidence of limits is specific to the affine–quadratic regime and does not extend in general.

\end{abstract}
										{ 
											\small
											\noindent
											{\bfseries 2010 Mathematics Subject Classification:}
											{Primary 
												90C25,
												65K05, 
												Secondary 
												49M27.
											}
											\noindent {\bfseries Keywords:}
                                            gradient descent,
											convex function,
											convex set,
                                            FISTA, 
										gradient descent,
                                        projection operator, 
                                        proximal operator,
											subdifferential operator
										}

\section{Introduction}

Throughout, we assume that 
\begin{empheq}[box=\mybluebox]{equation}
\text{$X$ is a real Hilbert space with inner product $\scal{\cdot}{\cdot}\colon X\times X\to\RR$, and induced norm $\|\cdot\|$.}
\end{empheq}
We also assume:
\begin{subequations}\label{intro}
\begin{empheq}[box=\mybluebox]{align}
& f : X \to \mathbb{R} \ \text{is convex and $\lip$-smooth, where } \lip \in \mathbb{R}_{++}, \label{30.1a} \\
& g : X \to \left]-\infty, +\infty\right] \ \text{is convex lower semicontinuous and proper}, \label{30.1b} \\
& F := f + g. \label{30.1c}
\end{empheq}
\end{subequations}
We study the problem
\begin{equation}
\minimize{x\in X}\ F(x):=f(x)+g(x).
\end{equation}
We set
\begin{equation}
\label{30.1d}
S := \argmin F,\qquad \mu := \min F(X).
\end{equation}
The PGM to solve \cref{intro} iterates the operator $T$ with stepsize $1/\lip$ defined by
\begin{equation}
\label{30.1e}
T := \prox_{\frac{1}{\lip} g}\!\left(\Id - \tfrac{1}{\lip}\nabla f\right).
\end{equation}
When $g\equiv 0$, PGM reduces to the gradient descent method.
\begin{example}[Method of Alternating Projections (MAP) as a PGM iterate.]
\label{ex:MAP}
Suppose that $U$ and $V$ are closed convex subsets of  $X$,
that\footnote{Let $C$ be a nonempty closed convex subset of $X$
  and let $x\in X$.
 Here and elsewhere, 
 we use $\iota_C$ to denote the \emph{indicator function} of $C$ defined by
$ \iota_{C}(x)=0$, if $x\in C$; and $\iota_{C}(x)=+\infty$, otherwise; 
 we use $P_Cx$ to denote the \emph{orthogonal projection} of $x$ onto the set $C$
 and we use $\dist{x}{C}:=\norm{x-P_Cx}$
to denote the \emph{distance of $x$ to $C$}.}
$f=\tfrac{1}{2}\distsq{\cdot}{V}$ and that $g=\iota_U$.
Then 
$\prox_g=P_U$, by, e.g.,  \cite[Example~23.4]{BC2017} and 
$\grad f=\Id-P_V$, by, e.g., 
\cite[Example~on~page~286]{Moreau1965} or 
\cite[Corollary~12.30]{BC2017}. Therefore, $\grad f$ 
is firmly nonexpansive by, e.g., \cite[Equation~1.7~on~page~241]{Zara71}, hence $1$-Lipschitz and 
the proximal-gradient operator in \cref{30.1e}
reduces to 
\begin{equation}
    T=P_UP_V,
\end{equation}
i.e., the MAP operator.
\end{example}

Let $x_0\in X$
and set $y_0:=x_0$. 
The accelerated schemes for the PGM generate two sequences 
$(x_k)_\knn$ and $(y_k)_\knn$ from a parameter sequence $(t_k)_{k\ge 0}$ via
\begin{equation}
x_{k+1}=T y_k,\qquad
y_{k+1}=x_{k+1}+\frac{t_k-1}{t_{k+1}}\,(x_{k+1}-x_k)\quad(\forall\knn),
\end{equation}
with the classical choices of $(t_k)_\knn$; see \cite{Nest83,BT2009,ChD15}. 
In the smooth case $g\equiv 0$, this iteration specializes to AGD by Nesterov \cite{Nest83}; for general $g$, it is the \emph{Fast Iterative Shrinkage--Thresholding Algorithm (FISTA)} of Beck and Teboulle \cite{BT2009}.  See also earlier results of Nesterov that generalize AGD to nonsmooth objective functions \cite{nesterov2004introductory}.
When $S\neq\varnothing$, these methods guarantee the optimal function-value rate
\begin{equation}
F(x_k)-\mu=\mathcal{O}\!\left(\tfrac{1}{k^{2}}\right).
\end{equation}

However, the \emph{convergence of the iterates} 
in discrete time has long been open:
for AGD as well as for APG, e.g., FISTA, convergence of $(x_k)_\knn$ to a point of $S$ under the classical assumptions on the parameter sequence $(t_k)_\knn$ was not settled for many years; convergence was known under slightly more damped rules (while preserving the $\mathcal{O}(1/k^2)$ rate), see, e.g., the scheme by Chambolle and Dossal \cite{ChD15}, and related inertial forward–backward analyses established convergence-rate statements for associated energies \cite{AttouchCabot2018}.

\paragraph{Continuous-time viewpoint, concurrent and very recent work.}
The accelerated flow (ODE model) provides a continuous-time surrogate that captures the $\mathcal{O}(1/t^{2})$ decay and underlies many Lyapunov constructions; see, e.g., \cite{SuBoydCandes2016,AttouchCabot2018}. 
Very recently, Jang-Ryu announced AI-assisted proofs establishing point convergence for the continuous-time flow and, shortly after, for the discrete Nesterov AGD in finite dimensions; for the precise timeline and context, see \cite[Section~1.2]{JangRyu2025} and the posts in \cite{RyuX2025}. 
Independently and in parallel, Bo{\c t}–Fadili–Nguyen proved weak convergence of the discrete AGD iterates and, crucially, extended their analysis to the composite setting to obtain convergence of APG schemes, including FISTA; see \cite[Theorem~1 and Section~2.4]{BFN2025}. 
Their manuscript also records the timeline of announcements and documents additional contributions beyond the finite-dimensional case.

\paragraph{Contribution.} 
In this paper, we analyze APG schemes in the case where $f$ is a quadratic function and $g$ is the indicator function
of a closed affine subspace, see \cref{eq:affineprob} below, assuming that a solution exists. 
Our results complement the recent works by Bo{\c t}--Fadili--Nguyen \cite{BFN2025} and by Jang-Ryu \cite{JangRyu2025}
as follows.
\begin{enumerate}[label=\fbox{R\arabic*},ref=R\arabic*]
\item \label{itm:R1}
Suppose that the parameter sequence is chosen such that the iterates of 
APG stay bounded.
We show that the sequence of iterates formed by the difference between the PGM sequence and the APG sequence generated by the same starting point converges weakly to $0$.
As a consequence, we identify the weak limit of the APG iterates to be the closest 
point to the starting point in $S$ (see \cref{thm:w:conv} below).

\item \label{itm:R2}
Under mild assumptions, we prove the strong convergence of APG schemes.
In this case, our assumptions on the parameter sequence are strikingly general: simply
any sequence $(t_k)_\knn$ that guarantees $F(x_k)\to \mu$
(see \cref{thm:st:conv} below).

\item \label{itm:R3}
A careful look at the conclusion of 
\cref{itm:R1} and \cref{itm:R2}, in view of \cref{lem:proxgradlim} below, reveals that APG schemes
have the same limit as PGM when applied 
with the same starting point
to solve \cref{eq:affineprob} below.
In  contrast, we show that this behaviour does not extend beyond the affine–quadratic setting,
by means of a counterexample in $\RR^2$; see \cref{prop:example-w2} below.
\end{enumerate}

\begin{remark}
 To our knowledge, there is no prior record of (a) identifying the weak limit of APG and (b) proving strong convergence of its iterates when the PGM operator \cref{30.1e} is affine.
\end{remark}

{\bf Organization and Notation.} 
In \cref{sec:affine} we develop a general framework for the iterates of a APG scheme when the associated PGM operator is affine.
In \cref{sec:bdd},
we provide a proof of the boundedness of APG iterates.\footnote{We note that Bo{\c t}–Fadili–Nguyen and Jang–Ryu gave the first proofs of boundedness for FISTA iterates; the former treats the possibly infinite–dimensional setting. Our proof, presented in \cref{sec:bdd} and included for completeness, was developed independently during the preparation of this manuscript.}
In \cref{sec:weak} and \cref{sec:strong}
 we provide our weak and strong convergence results for APG with the identified limits.
In \cref{sec:example} we provide a limiting example 
which shows that the limit agreement does not generalize beyond the affine–quadratic setting.
Finally, we provide a numerical application to image recovery.
Our notation is standard and follows largely \cite{BC2017}.

\section{A general framework for affine nonexpansive operators}
\label{sec:affine}
Let $\shift\in X$.
In this section, we assume that\footnote{Let $T:X\to X$ and let $(x,y)\in X\times X$. Then 
$T$ is \emph{nonexpansive} if $\norm{Tx-Ty}\le \norm{x-y}$; and $T$ 
is \emph{averaged nonexpansive} if there exists a nonexpansive operator $N:X\to X$
and a constant $\alpha\in \opint{0,1}$ such that $T=(1-\alpha )\Id+\alpha N$.}

\begin{equation}
\label{eq:L:assm}
\text{$L:X\to X$ is linear and nonexpansive,}
\end{equation}
 and we set
\begin{equation}
  \label{eq:def:T}
T:X\to X: x\mapsto Lx+\shift.
\end{equation}
\begin{remark}
\label{rem:nonnon}
It is straightforward to see that 
$T$ is nonexpansive (respectively, averaged nonexpansive)
if and only if 
$L$ is nonexpansive (respectively, averaged nonexpansive).
\end{remark}
Let $(x,y)\in X\times X$. It is clear that
\begin{equation}
\label{eq:obvious}
    T(x+y)=Tx+Ly=Ty+Lx.
\end{equation}
Rearranging yields 
\begin{equation}
\label{eq:obvious2}
    Lx=T(x+y)-Ty.
\end{equation}
Suppose that $U$ is a closed affine subspace of $X$. We use the notation
\begin{equation}
    \parl U:=U-U
\end{equation}
to denote the parallel subspace of $U$. 

We recall the following key fact.
\begin{fact}
\label{fact:Caffine}
Let $C$ be a nonempty closed convex subset of $X$,
let $U$ be a closed affine subspace of $X$,
 and let $(x,y)\in X\times X$.
 Then the following hold.
 \begin{enumerate}
\item 
\label{fact:Caffine:i}
$P_{y+C}=y+P_C(\cdot-y)$.
\item 
\label{fact:Caffine:ii}
Suppose that $U$ is a linear subspace of $X$.
Then $P_U$ is linear.
Moreover, $p=P_Ux$ $\siff$ $(p,x-p)\in U\times U^\perp$.
\item 
\label{fact:Caffine:iii}
$P_U$ is affine.
In particular, $P_U=P_U0+P_{\parl U}$.
\end{enumerate}
\end{fact}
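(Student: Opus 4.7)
All three items follow from the variational characterization of the metric projection together with the translation structure of affine sets, and I would establish them in the stated order because \cref{fact:Caffine:iii} reuses \cref{fact:Caffine:i} and \cref{fact:Caffine:ii}.

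For \cref{fact:Caffine:i}, the plan is a simple change of variables: every $z \in y + C$ is uniquely of the form $z = y + w$ with $w \in C$, so $\norm{x - z}^2 = \norm{(x - y) - w}^2$. Minimizing the right-hand side over $w \in C$ yields the unique minimizer $P_C(x - y)$, and uniqueness of the projection onto the closed convex set $y + C$ forces $P_{y+C}x = y + P_C(x - y)$.

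For \cref{fact:Caffine:ii}, I would start from the standard variational inequality $\scal{x - P_U x}{u - P_U x} \le 0$ for all $u \in U$. Since $U$ is a linear subspace, $P_U x \pm v \in U$ for every $v \in U$, so this inequality collapses to the orthogonality condition $x - P_U x \in U^\perp$; the converse characterization is immediate from the same variational inequality. Linearity of $P_U$ then follows at once: for $p_i := P_U x_i$ and scalars $\alpha, \beta$, the candidate $\alpha p_1 + \beta p_2$ lies in $U$, while $\alpha x_1 + \beta x_2 - (\alpha p_1 + \beta p_2) = \alpha(x_1 - p_1) + \beta(x_2 - p_2) \in U^\perp$, so the characterization identifies it as $P_U(\alpha x_1 + \beta x_2)$.

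For \cref{fact:Caffine:iii}, I would fix any base point $u_0 \in U$, so that $U = u_0 + \parl U$. Applying \cref{fact:Caffine:i} with $y := u_0$ and $C := \parl U$ yields $P_U x = u_0 + P_{\parl U}(x - u_0)$, and since $\parl U$ is a closed linear subspace, \cref{fact:Caffine:ii} makes $P_{\parl U}$ linear, so $P_U$ is visibly affine. Evaluating at $x = 0$ gives $P_U 0 = u_0 - P_{\parl U} u_0$, and substituting back eliminates $u_0$ to produce $P_U x = P_U 0 + P_{\parl U} x$. This final cancellation is the only step where any care is warranted, because it implicitly shows that the decomposition is independent of the arbitrary choice of $u_0$; otherwise there is no real obstacle. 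The proof is included mainly for reference, as all three identities are classical and appear in \cite{BC2017}.
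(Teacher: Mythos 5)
Your proof is correct, but it takes a different route from the paper: the paper does not prove this fact at all, it simply cites the standard references (item~(i) is \cite[Proposition~3.19]{BC2017}, item~(ii) is \cite[Proposition~3.24(i)]{BC2017}, and item~(iii) is \cite[Proposition~3.22(ii)]{BC2017} combined with item~(i)), which is why it is labelled a Fact rather than a Lemma. Your self-contained argument is the standard textbook derivation and it is sound: the change of variables in (i) is exactly the content of the cited proposition; in (ii) the passage from the variational inequality to orthogonality (using that $P_Ux\pm v\in U$ kills the sign of the inner product) and back (via the Pythagorean expansion, or equivalently by checking the variational inequality for a candidate $p$ with $p\in U$ and $x-p\in U^\perp$) is complete, and linearity follows correctly from the uniqueness in that characterization; in (iii) the cancellation $P_U0=u_0-P_{\parl U}u_0$ correctly eliminates the arbitrary base point, and you rightly flag this as the one place needing a moment's care. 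What your approach buys is a fully elementary, citation-free verification; what the paper's approach buys is brevity and consistency with its convention of importing classical material from \cite{BC2017} as Facts. No gaps.
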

\begin{proof}
\cref{fact:Caffine:i}:
This is \cite[Proposition~3.19]{BC2017}.
\cref{fact:Caffine:ii}:
This is \cite[Proposition~3.24(i)]{BC2017}.
\cref{fact:Caffine:iii}:
This is \cite[Proposition~3.22(ii)]{BC2017}
combined with \cref{fact:Caffine:i}.
\end{proof}

The following theorem is fundamental in our proofs of convergence.
\begin{theorem}
\label{thm:mainconv}
Let $W$ be a closed affine subspace of $X$,
let $(x_k)_\knn$ 
and $(p_k)_\knn$ be sequences in $X$, 
 and let $(s_k)_\knn$ be a sequence in $(\parl W)^\perp$.
 Suppose that $(\forall \knn)$
 \begin{equation}
 \label{eq:seq:xps}
x_k=p_k+s_k.
 \end{equation}
 Then the following hold.
 \begin{enumerate}
\item 
\label{thm:mainconv:0}
$\norm{s_k}\le \dist{x_k}{W}+\dist{p_k}{W}$.
 \item 
\label{thm:mainconv:-1}
Suppose that 
$\dist{x_k}{W}\to 0$
and $\dist{p_k}{W}\to 0$.
Then $s_k\to 0$.
If, in addition, $p_k\to p^*$,
then 
$x_k\to p^*$.

\item 
\label{thm:mainconv:i}
Suppose that $(p_k)_\knn$ is bounded.
Then $(x_k)_\knn$ is bounded $\siff $
$(s_k)_\knn$ is bounded.
\item 
\label{thm:mainconv:ii}
Suppose that $(x_k)_\knn$ and  $(p_k)_\knn$ are bounded
and  that every weak cluster point of 
$(x_k)_\knn$ and of $(p_k)_\knn$ lies in $W$.
Then the following hold.
\begin{enumerate}
\item
\label{thm:mainconv:iia}
$(s_k)_\knn$ converges weakly to $0$.
\item 
\label{thm:mainconv:iib}
If, in addition, $(\exists p^*\in W)$ such that $p_k\weakly p^*$
then 
\begin{equation}
x_k\weakly p^*.
\end{equation}
\end{enumerate}
 \end{enumerate}
\end{theorem}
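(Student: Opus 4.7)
The plan is to exploit the orthogonal decomposition induced by projection onto $W$. The key observation is that for any $x\in X$, the residual $x-P_W x$ lies in $(\parl W)^\perp$: fixing $w_0\in W$, we have $W=w_0+\parl W$, so \cref{fact:Caffine} gives $P_W x=w_0+P_{\parl W}(x-w_0)$, whence $x-P_W x=(x-w_0)-P_{\parl W}(x-w_0)\in(\parl W)^\perp$. Since $s_k\in(\parl W)^\perp$ by hypothesis, applying the affine map $P_W$ to $x_k=p_k+s_k$ (using \cref{fact:Caffine:iii}) together with $P_{\parl W}s_k=0$ yields $P_W x_k=P_W p_k$. Consequently,
\[
s_k=(x_k-P_W x_k)-(p_k-P_W p_k),
\]
and the triangle inequality gives \cref{thm:mainconv:0} immediately. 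From here, \cref{thm:mainconv:-1} follows at once: the distance hypotheses combined with \cref{thm:mainconv:0} force $s_k\to 0$, and then $p_k\to p^*$ gives $x_k=p_k+s_k\to p^*$. Similarly, \cref{thm:mainconv:i} is an immediate consequence of $s_k=x_k-p_k$ and the assumed boundedness of $(p_k)$.

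The substantive content is \cref{thm:mainconv:ii}. By \cref{thm:mainconv:i}, the sequence $(s_k)$ is bounded, so to prove \cref{thm:mainconv:iia} it suffices to show that $0$ is the only weak sequential cluster point. Given a weakly convergent subsequence $s_{k_j}\weakly s^*$, I pass to a further subsequence (using boundedness of $(x_k)$ and $(p_k)$ in the Hilbert space $X$) along which $x_{k_j}\weakly x^*$ and $p_{k_j}\weakly p^*$; linearity of weak convergence in $x_k=p_k+s_k$ then forces $x^*=p^*+s^*$. By hypothesis $x^*,p^*\in W$, so $s^*=x^*-p^*\in\parl W$. On the other hand, $(\parl W)^\perp$ is a closed linear subspace, hence weakly closed, so $s^*\in(\parl W)^\perp$. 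Thus $s^*\in\parl W\cap(\parl W)^\perp=\{0\}$, proving \cref{thm:mainconv:iia}. Then \cref{thm:mainconv:iib} is immediate from $x_k=p_k+s_k$ and $s_k\weakly 0$.

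The only mildly delicate step is the weak-cluster argument in \cref{thm:mainconv:iia}: one must simultaneously extract weakly convergent subsequences of all three sequences and then combine the assumed affine-subspace constraint on cluster points with the perpendicularity built into $(s_k)$ to squeeze the cluster point into $\parl W\cap(\parl W)^\perp$. Everything else reduces to the identity displayed above and the triangle inequality.
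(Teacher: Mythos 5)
Your proposal is correct and follows essentially the same route as the paper: the key identity $s_k=(\Id-P_W)x_k-(\Id-P_W)p_k$ (obtained via $P_Wx_k=P_Wp_k$, which is exactly the paper's use of the affine decomposition of $P_W$ together with $P_{\parl W}s_k=0$), the triangle inequality for \cref{thm:mainconv:0}, and the weak-cluster-point argument squeezing $s^*$ into $\parl W\cap(\parl W)^\perp=\{0\}$ for \cref{thm:mainconv:iia}. No gaps.
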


\begin{proof}
\cref{thm:mainconv:0}:
Indeed, let $\knn$ and  observe that,
by \cref{fact:Caffine}\cref{fact:Caffine:iii},
applied with $U$ replaced by $W$,
$\Id-P_W=-P_W0+\Id-P_{\parl W}$.  
Now, applying
\cref{eq:obvious2} with 
$(x,y,T,L)$
replaced by $(s_k,p_k,\Id-P_W, \Id-P_{\parl W})$,
 and recalling that $(s_k)_\knn$ lies in $(\parl W)^\perp$
yield
\begin{equation}
s_k=s_k-P_{\parl W}s_k=(\Id-P_{\parl W} )s_k=(\Id-P_W)(x_k)-(\Id-P_W)(p_k).
\end{equation}
The triangle inequality yields
\begin{equation}
\norm{s_k}\le \norm{(\Id-P_W)(x_k)}+\norm{(\Id-P_W)(p_k)}
=\dist{x_k}{W}+\dist{p_k}{W}.
\end{equation}

\cref{thm:mainconv:-1}:
This follows from 
\cref{thm:mainconv:0} and \cref{eq:seq:xps}.

\cref{thm:mainconv:i}:
This is clear.
\cref{thm:mainconv:iia}:
Let $\overline{s}$ be a weak cluster point of
$(s_k)_\knn$, say $s_{n_k}\weakly \overline{s}$.
On the one hand, because $(\parl W)^\perp$
is closed we have $\overline{s}\in (\parl W)^\perp$.
On the other hand, and after dropping to a subsubsequence and relabelling if needed, we can and do 
assume that $p_{n_k}\weakly \overline{p}$ and 
$x_{n_k}\weakly \overline{x}$.
By assumption $(\overline{p},\overline{x})\in W\times W$. Therefore,
we have
\begin{equation}
  {(\parl W)}^\perp \ni\overline{s}\leftarrow s_{n_k}=x_{n_k}-p_{n_k}\weakly \overline{x}-\overline{p}\in W-W=\parl W.
\end{equation}
That is, $\overline{s}=0$. Since $\overline{s}$ was an arbitrary weak cluster point 
we conclude that $s_k\weakly 0$.
\cref{thm:mainconv:iib}:
This is a direct consequence of \cref{thm:mainconv:iia}
in view of \cref{eq:seq:xps}.
\end{proof}

We now prove the following key lemma.
\begin{lemma}
\label{lem:foundation}
Recall \cref{eq:L:assm} and  \cref{eq:def:T}. We have
\begin{enumerate}
\item 
\label{lem:foundation:i}
$\fix L=(\cran(\Id-L))^\perp$.
\item 
\label{lem:foundation:ii}
Let $z\in \ran (\Id-L)$ and let $k\in \NN$.
Then $L^kz \in \ran (\Id-L)$.
\item 
\label{lem:foundation:iii}
$\fix T\neq \fady \siff \shift\in \ran(\Id-L)$.
\end{enumerate}
Suppose that $\fix T\neq \fady$.
Then we additionally have.
\begin{enumerate}[resume]
\item 
\label{lem:foundation:iv}
$\parl(\fix T)=\fix L$.
In particular, $\fix T=u_0+\fix L$,
where 
$u_0=P_{\fix T}0$.
Consequently, 
$P_{\fix T}=u_0+P_{\fix L}$.
\item 
\label{lem:foundation:v}
$\ran(\Id-L)=\ran (\Id-T)$.
\end{enumerate}
\end{lemma}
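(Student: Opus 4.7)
The plan is to treat the five items in order; items (ii)--(v) reduce to routine bookkeeping in linear algebra together with the hypothesis $\fix T\neq \fady$, while (i) is the only statement that truly uses the nonexpansiveness of $L$ and will be the main obstacle.

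For (i), the strategy is to rewrite the orthogonal complement via the adjoint: since $\Id - L$ is bounded and linear, $(\cran(\Id - L))^\perp = \ker((\Id - L)^*) = \fix L^*$. The remaining task is to establish the classical identity $\fix L = \fix L^*$ for nonexpansive linear operators on a Hilbert space. I would do this by picking $x \in \fix L$ and chaining $\|x\|^2 = \langle Lx, x \rangle = \langle x, L^* x \rangle \leq \|x\|\,\|L^* x\| \leq \|x\|^2$, forcing equality throughout; the Cauchy--Schwarz equality case together with $\|L^* x\| = \|x\|$ then yields $L^* x = x$. The reverse containment $\fix L^*\subseteq \fix L$ is symmetric, using $L^{**} = L$. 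Alternatively one can simply cite the corresponding fact in \cite{BC2017}. The hard part is precisely this adjoint-symmetric characterization of fixed points; once granted, the rest of the lemma is essentially formal.

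For (ii), given $z = (\Id - L)w$, I would simply write $L^k z = L^k(\Id - L)w = (\Id - L)(L^k w) \in \ran(\Id - L)$, exploiting that powers of $L$ commute with $\Id - L$. For (iii), unfolding \cref{eq:def:T} gives $Tx = x \siff \shift = (\Id - L)x$, so $\fix T \neq \fady \siff \shift \in \ran(\Id - L)$.

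Assuming $\fix T \neq \fady$, for (iv) I would fix any $u \in \fix T$ and compute, using linearity of $L$ and $Tu = u$, the identity $T(u+v) - (u+v) = Lv - v$ for every $v \in X$, so that $u + v \in \fix T \siff v \in \fix L$. This gives $\fix T = u + \fix L$ for every $u \in \fix T$, whence $\parl(\fix T) = \fix L$; specializing to $u = u_0 := P_{\fix T} 0$ and invoking \cref{fact:Caffine}\cref{fact:Caffine:iii} yields $P_{\fix T} = u_0 + P_{\fix L}$. Finally, for (v), using (iii), pick $u \in \fix T$ so that $\shift = (\Id - L)u$; then $(\Id - T)x = x - Lx - \shift = (\Id - L)(x - u)$ for every $x \in X$, and the substitution $x \leftrightarrow y + u$ immediately gives both containments in $\ran(\Id - T) = \ran(\Id - L)$.
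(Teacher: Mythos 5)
Your proposal is correct and follows essentially the same route as the paper: item (i) via $(\cran(\Id-L))^\perp=\ker(\Id-L^*)=\fix L^*$ together with $\fix L=\fix L^*$ (which the paper simply cites from Bauschke--Deutsch--Hundal--Park, whereas you also sketch the standard Cauchy--Schwarz equality argument), and items (ii)--(v) by the same direct computations. Your phrasing of (iv) as the single equivalence $u+v\in\fix T\siff v\in\fix L$ is a slightly more compact packaging of the paper's two inclusions, but it is the same argument.
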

\begin{proof}
\cref{lem:foundation:i}: On the one hand, it follows from  
\cite[Fact~2.25(iv)]{BC2017} that 
  $(\cran(\Id - L))^\perp = \fix L^*$.
  On the other hand, \cite[Lemma~2.1]{BDH03} implies that
  $\fix L=\fix L^*$.
  Altogether, $\fix L=(\cran(\Id-L))^\perp$ as claimed.

  \cref{lem:foundation:ii}: 
  By assumption $(\exists u \in X)$ such that $z = (\Id - L) u$.
  Therefore, $L^k z = L^k(\Id - L) u = L^k u - L^{k+1} u = (\Id - L) L^k u
  \in \ran (\Id-L)$.
  
  \cref{lem:foundation:iii}: It follows from \eqref{eq:def:T}
that $\fix T\neq \fady$
$\siff$
$(\exists w\in X) $ $w=Tw$
$\siff$
$(\exists w\in X) $ $\shift=w-Lw$
$\siff$
$\shift\in \ran(\Id-L)$.
  
  \cref{lem:foundation:iv}: 
  $(\subseteq)$: 
  Let $z \in \parl(\fix T)$. Then $(\exists (\overline{x}, \widetilde{x}) \in \fix T\times \fix T) $ \; $z= \overline{x} - \widetilde{x}$. Moreover, $z=\overline{x} - \widetilde{x} = T(\overline{x}) - T(\widetilde{x}) = L(\overline{x}) - L(\widetilde{x}) = L(\bar{x} - \widetilde{x})=Lz$. Hence, $z\in \fix L$. 
$(\supseteq)$: 
 Let $\overline{x} \in \fix L$.
 It follows from \cref{lem:foundation:iii} that $(\exists u \in X)$ 
 such that $ \shift = u - L u$, and hence $u \in \fix T$. 
 Moreover, $u + \overline{x}=\shift+Lu+L\overline{x}=\shift+L(u+\overline{x})=T(u+\overline{x})$.
 Now
 \begin{equation}
\overline{x} = u + \overline{x} - u
\in \fix T-\fix T=\parl(\fix T).
 \end{equation}
That is, $\parl(\fix T)=\fix L$ as claimed. 
The remaining claims follow
from applying \cref{fact:Caffine}\cref{fact:Caffine:iii}
with $U $ replaced by $\fix T$.
  
  \cref{lem:foundation:v}: 
  Note that  $ \ran(\Id - T)=-\shift+\ran(\Id-L)$.
  Now combine this with  \cref{lem:foundation:iii}.
\end{proof}

\begin{proposition}
    \label{prop:main}
Let $(\alpha_k)_\knn$ be a sequence of real numbers.
Let $x_0\in X$ and set $r_0:=0\in X$.
Update via
\begin{subequations}
    \begin{align}
        x_{k+1}&=T(x_k+r_k)
        \\
        r_{k+1}&=\alpha_k(x_{k+1}-x_k).
    \end{align}
\end{subequations}
Define the sequence $(s_k)_\knn:=\big(\sum_{n=1}^kL^nr_{k-n}\big)_\knn$.
Let $\knn$.
The following hold.
\begin{enumerate}
    \item 
    \label{prop:main:i}
    $x_k=T^kx_0+\sum_{n=1}^kL^n
    r_{k-n}=T^kx_0+s_k$.
\end{enumerate}
    Suppose that $\fix T\neq \fady$. Then we have.
\begin{enumerate}[resume]
    \item 
    \label{prop:main:ii}
    The sequence $(r_k)_\knn$ lies in $\ran(\Id-L)$.
    \item 
    \label{prop:main:iii}
    The sequence $(s_k)_\knn$ lies in $\ran(\Id-L)$.
    \item 
    \label{prop:main:iv}
    $P_{\fix T} x_k=P_{\fix T} T^kx_0$.
\end{enumerate}
\end{proposition}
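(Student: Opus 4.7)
The four items are closely linked: (i) is a pure induction using the affine identity \cref{eq:obvious}, (ii) is an induction that uses (i) together with \cref{lem:foundation}, (iii) is an immediate consequence of (ii) and \cref{lem:foundation}\cref{lem:foundation:ii}, and (iv) combines (iii) with the orthogonal decomposition in \cref{fact:Caffine} and \cref{lem:foundation}.

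For \cref{prop:main:i}, I proceed by induction on $k$. The base case $k=0$ is immediate since $s_0$ is an empty sum. For the inductive step, assuming $x_k = T^k x_0 + s_k$, I apply \cref{eq:obvious} to write
\begin{equation}
x_{k+1} = T(x_k + r_k) = T(x_k) + L r_k = T(T^k x_0 + s_k) + L r_k = T^{k+1}x_0 + L s_k + L r_k,
\end{equation}
and the routine reindexing $L s_k + L r_k = \sum_{n=1}^{k+1} L^n r_{k+1-n} = s_{k+1}$ closes the induction.

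For \cref{prop:main:ii}, under the assumption $\fix T \neq \emptyset$, \cref{lem:foundation}\cref{lem:foundation:iii} gives $\shift \in \ran(\Id-L)$. I argue by induction that $r_k \in \ran(\Id - L)$ for all $\knn$. The base $r_0 = 0$ is trivial. For the inductive step, $r_0,\ldots,r_k \in \ran(\Id - L)$ combined with \cref{lem:foundation}\cref{lem:foundation:ii} gives $L^n r_{j} \in \ran(\Id-L)$ for all relevant $n,j$, and since $\ran(\Id - L)$ is a linear subspace it follows that both $s_k$ and $s_{k+1}$ lie there. Using \cref{prop:main:i}, I then write
\begin{equation}
x_{k+1} - x_k = (T^{k+1}x_0 - T^k x_0) + (s_{k+1} - s_k) = -(\Id - L)(T^k x_0) + \shift + (s_{k+1}-s_k),
\end{equation}
and each term lies in $\ran(\Id - L)$, so $r_{k+1} = \alpha_k(x_{k+1}-x_k) \in \ran(\Id - L)$.

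For \cref{prop:main:iii}, the preceding argument already shows $s_k \in \ran(\Id - L)$. Finally, for \cref{prop:main:iv}, I combine \cref{lem:foundation}\cref{lem:foundation:i} and \cref{lem:foundation:iv} to get $\parl(\fix T) = \fix L = (\cran(\Id - L))^\perp$, so $(\parl(\fix T))^\perp = \cran(\Id - L) \supseteq \ran(\Id - L)$. Hence $s_k \in (\parl(\fix T))^\perp$, i.e.\ $P_{\fix L} s_k = 0$. Using \cref{fact:Caffine}\cref{fact:Caffine:ii} and \cref{fact:Caffine:iii} together with the decomposition $P_{\fix T} = u_0 + P_{\fix L}$ from \cref{lem:foundation}\cref{lem:foundation:iv}, I conclude
\begin{equation}
P_{\fix T} x_k = u_0 + P_{\fix L}(T^k x_0 + s_k) = u_0 + P_{\fix L} T^k x_0 = P_{\fix T} T^k x_0.
\end{equation}

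The only genuinely non-mechanical point is ensuring that the inductions in \cref{prop:main:i} and \cref{prop:main:ii} are set up in the correct order; once the affine identity \cref{eq:obvious} is used to prove (i), it is natural to use (i) to express $x_{k+1}-x_k$ in a form where every summand sits in $\ran(\Id-L)$ for (ii). I expect the main bookkeeping obstacle to be the index shift in the reindexing $L s_k + L r_k = s_{k+1}$, but this is routine.
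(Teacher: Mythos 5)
Your proof is correct and follows essentially the same route as the paper: induction via the affine identity \cref{eq:obvious} for \cref{prop:main:i}, a (strong) induction for \cref{prop:main:ii}, linearity of $\ran(\Id-L)$ for \cref{prop:main:iii}, and the decomposition $P_{\fix T}=u_0+P_{\fix L}$ together with $\fix L=(\cran(\Id-L))^\perp$ for \cref{prop:main:iv}. The only cosmetic difference is in \cref{prop:main:ii}, where the paper writes $x_{k+1}-x_k=Tx_k-x_k+Lr_k$ and invokes $\ran(\Id-T)=\ran(\Id-L)$ from \cref{lem:foundation}\cref{lem:foundation:v}, whereas you decompose via \cref{prop:main:i} and use $\shift\in\ran(\Id-L)$ from \cref{lem:foundation}\cref{lem:foundation:iii}; both are equally valid.
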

\begin{proof}
\cref{prop:main:i}:
We proceed by induction.
The base case when $k=0$ is clear.
Now suppose for some $k\ge 0$ we have 
\begin{equation}
\label{eq:induc:aff}
    x_k=T^kx_0+\sum_{n=1}^kL^nr_{k-n}=T^kx_0+s_k.
\end{equation}
Then \cref{eq:induc:aff} 
and \cref{eq:obvious} applied 
with $(x,y)$ replaced by 
$\Big(T^kx_0, \sum_{n=1}^kL^nr_{k-n} +r_k\Big)$
yield
\begin{subequations}
    \begin{align}
        x_{k+1}&=T(x_k+r_k)
        \\
        &=T(T^kx_0)+ L\Big(\sum_{n=1}^kL^nr_{k-n}\Big)+Lr_k
        \\
        &=T^{k+1}x_0+ 
        \sum_{n=1}^kL^{n+1}r_{k-n}+Lr_k
         \\
        &=T^{k+1}x_0+ 
        \sum_{n=1}^{k+1}L^{n}r_{k+1-n}.
    \end{align}
\end{subequations}
Hence, the conclusion holds.

\cref{prop:main:ii}:
We proceed by induction. At $k=0$ 
 we have 
$r_0=0\in \ran(\Id-L)$.
Now suppose for some $k\ge 0$
we have $r_k\in \ran (\Id-L)$.
Using  
\cref{eq:obvious} applied with $(x,y)$
replaced by $(x_k,r_k)$,
\cref{lem:foundation}\cref{lem:foundation:ii}\&\cref{lem:foundation:v}
we have 
\begin{subequations}
    \begin{align}
r_{k+1}&=\alpha_k(x_{k+1}-x_{k})
=\alpha_k(Tx_{k}-x_{k}+Lr_k)
\\
&\in \alpha_k(\ran (T-\Id)+\ran (\Id-L))
=\alpha_k(-\ran (\Id-L)+\ran (\Id-L))
\\
&=\alpha_k\ran (\Id-L)=\ran (\Id-L). 
    \end{align}
\end{subequations}

\cref{prop:main:iii}:
Clearly $s_0=0\in \ran (\Id-L)$.
Let $k\ge n\ge 1$.
It follows from  \cref{lem:foundation}\cref{lem:foundation:v}
applied with $z$ replaced 
by 
$r_{k-n}$, in view of \cref{prop:main:ii}, that 
$L^nr_{k-n}\in \ran(\Id-L)$.
The conclusion follows from the fact that 
$ \ran(\Id-L)$ is a linear subspace.

\cref{prop:main:iv}:
Indeed, in view of \cref{prop:main:i}, 
\cref{lem:foundation}\cref{lem:foundation:iv}, and 
\cref{eq:obvious} 
applied with $(L,T)$ replaced by $(P_{\fix L},P_{\fix T})$,
we have 
\begin{equation}
    P_{\fix T}x_k=P_{\fix T}(T^kx_0+s_k)
   = P_{\fix T}(T^kx_0)+P_{\fix L}(s_k)
   = P_{\fix T}(T^kx_0).
\end{equation}
This completes the proof.
\end{proof}

A direct consequence of \cref{prop:main} is the following 
convergence result which is the basis of our weak convergence proof.
\begin{proposition}
\label{prop:conv}
Let $(\alpha_k)_\knn$ be a sequence of real numbers.
Let $x_0\in X$ and set $r_0:=0\in X$.
Let $(x_k)_\knn$ be defined as in \cref{prop:main}.
    Suppose that $\fix T\neq \fady$, that the sequence 
$(x_k)_\knn$ is bounded and its weak cluster points lie in $\fix T$
 and that $ T^kx_0\to P_{\fix T}x_0$. Then 
 \begin{equation}
    \text{$(x_k)_\knn$ converges weakly to $P_{\fix T}x_0$.}
 \end{equation}
\end{proposition}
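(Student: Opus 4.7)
The plan is to combine the decomposition from \cref{prop:main} with the abstract convergence machinery of \cref{thm:mainconv}, with $W := \fix T$ playing the role of the affine subspace. The decomposition $x_k = T^k x_0 + s_k$ of \cref{prop:main}\cref{prop:main:i} is already in the form $x_k = p_k + s_k$ required by \cref{thm:mainconv}, with the identification $p_k := T^k x_0$. So the bulk of the work is just verifying that the hypotheses of \cref{thm:mainconv}\cref{thm:mainconv:iib} hold.

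First, I would verify that $(s_k)_\knn$ lies in $(\parl W)^\perp$. By \cref{lem:foundation}\cref{lem:foundation:iv}, $\parl(\fix T) = \fix L$, and by \cref{lem:foundation}\cref{lem:foundation:i}, $(\fix L)^\perp = \cran(\Id - L)$. Since \cref{prop:main}\cref{prop:main:iii} gives $s_k \in \ran(\Id - L) \subseteq \cran(\Id - L) = (\parl W)^\perp$, this is immediate. Next, I would verify the boundedness and cluster-point hypotheses: $(x_k)_\knn$ is bounded by assumption and its weak cluster points lie in $\fix T = W$ by assumption; meanwhile $(p_k)_\knn = (T^k x_0)_\knn$ converges strongly to $P_{\fix T} x_0 \in \fix T = W$ by assumption, hence it is bounded and its unique (weak) cluster point $P_{\fix T} x_0$ lies in $W$.

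With these checks in place, \cref{thm:mainconv}\cref{thm:mainconv:iib} applies directly with $p^* := P_{\fix T} x_0$, yielding $x_k \weakly P_{\fix T} x_0$, which is exactly the desired conclusion.

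I do not expect any serious obstacle here; the real work was done in proving \cref{prop:main} (extracting the decomposition and showing $s_k \in \ran(\Id - L)$) and \cref{thm:mainconv} (the abstract statement about sequences splitting across an affine subspace and its orthogonal complement). The mild subtlety is bookkeeping: keeping straight that $W$ is affine (not necessarily a linear subspace), so that the correct parallel subspace is $\fix L$ rather than $\fix T$ itself, and that the inclusion $\ran(\Id-L) \subseteq (\fix L)^\perp$ comes from the closure-then-annihilator identity in \cref{lem:foundation}\cref{lem:foundation:i}. Once those identifications are made, the proof is a one-line invocation of \cref{thm:mainconv}\cref{thm:mainconv:iib}.
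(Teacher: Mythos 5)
Your proof is correct and follows essentially the same route as the paper's: both invoke the decomposition $x_k = T^k x_0 + s_k$ from \cref{prop:main}, use \cref{lem:foundation}\cref{lem:foundation:iv}\&\cref{lem:foundation:i} to identify $(\parl \fix T)^\perp = \cran(\Id-L) \supseteq \ran(\Id-L) \ni s_k$, and then apply \cref{thm:mainconv}\cref{thm:mainconv:iib} with $W = \fix T$, $p_k = T^k x_0$, and $p^* = P_{\fix T}x_0$. Your write-up is in fact slightly more explicit than the paper's, since you cite \cref{prop:main}\cref{prop:main:iii} for the membership $s_k \in \ran(\Id-L)$, which the paper leaves implicit.
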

\begin{proof}
Indeed, \cref{lem:foundation}\cref{lem:foundation:iv}\&\cref{lem:foundation:i}
implies that $\parl(\fix T)=\fix L=(\cran(\Id-L))^\perp$.
Now combine this with \cref{thm:mainconv}\cref{thm:mainconv:iib}
applied with
$(p_k)_\knn$ replaced by $(T^kx_0)_\knn$,
$W $ replaced by $\fix T$
 and $p^*$ replaced by $P_{\fix T}x_0\in \fix T$,
 in view of  \cref{prop:main} \cref{prop:main:i}.
\end{proof}

We now turn to the fundamental building blocks of the strong convergence result.
\begin{proposition}
\label{prop:aux}
Let $H_1, H_2$ be Hilbert spaces and let $A:H_1\to H_2$ be linear and continuous.
Suppose that $\operatorname{ran}A$
is closed. Then there exists $C\ge 0$ such that $(\forall x\in H_1)$
\begin{equation}
\dist{x}{\ker A}\ \le\ C\,\|Ax\|.
\end{equation}
\end{proposition}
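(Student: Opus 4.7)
The natural approach is to reduce the inequality to the boundedness of an inverse of a continuous bijection and invoke the bounded inverse theorem. Concretely, since $A$ is continuous, $\ker A$ is a closed subspace of $H_1$, so we have the orthogonal decomposition $H_1=\ker A\oplus (\ker A)^\perp$ and the projection $P:=P_{(\ker A)^\perp}$ satisfies $\dist{x}{\ker A}=\|Px\|$ for every $x\in H_1$. Moreover $A(Px)=Ax$ for every $x$, because $x-Px\in\ker A$.

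My plan is to first consider the restriction $A_0:=A|_{(\ker A)^\perp}:(\ker A)^\perp\to\ran A$. I will verify three properties of $A_0$: it is linear and continuous (inherited from $A$), its range equals $\ran A$ (obvious from $A(Px)=Ax$), and it is injective (because $\ker A_0=(\ker A)^\perp\cap \ker A=\{0\}$). Because $\ran A$ is assumed closed, it is itself a Hilbert space (as a closed subspace of $H_2$); similarly $(\ker A)^\perp$ is a Hilbert space. Therefore $A_0$ is a continuous linear bijection between two Hilbert spaces.

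I will then apply the bounded inverse theorem (a consequence of the open mapping theorem) to conclude that $A_0^{-1}:\ran A\to (\ker A)^\perp$ is bounded; set $C:=\|A_0^{-1}\|\ge 0$. For an arbitrary $x\in H_1$, combining $\dist{x}{\ker A}=\|Px\|$, $Px=A_0^{-1}(A(Px))=A_0^{-1}(Ax)$, and $\|A_0^{-1}(Ax)\|\le C\|Ax\|$ yields the desired inequality.

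The only delicate step is the verification that all hypotheses of the bounded inverse theorem hold in this Hilbert-space setting; once $\ran A$ is known to be closed and $A_0$ is recognized as a continuous linear bijection between Banach spaces, everything else is routine. No elaborate estimate is needed—the closedness of $\ran A$ does the real work by supplying completeness on the target side.
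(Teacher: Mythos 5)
Your proposal is correct and follows essentially the same route as the paper: both decompose $x$ along $\ker A\oplus(\ker A)^\perp$, restrict $A$ to $(\ker A)^\perp$ to obtain a continuous linear bijection onto the closed (hence complete) subspace $\ran A$, and invoke the Bounded Inverse Theorem to set $C=\|A_0^{-1}\|$. No gaps; the argument matches the paper's proof step for step.
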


\begin{proof}
Let  $x\in  H_1$ and let $(\overline{x},x^{\perp})\in \ker A\times (\ker A)^\perp $
such that 
$x=\overline{x}+x^{\perp}.$
Observe that  $Ax=A(\overline{x}+x^{\perp})=Ax^{\perp}$.
Hence, $x-P_{\ker A}x=\overline{x}+x^{\perp}-\overline{x}=x^{\perp}$,
and therefore
\begin{equation}
\label{eq:binv1}
\dist{x}{\ker A}= \|x^{\perp}\|.
\end{equation}
Now consider the restriction $A_\perp:(\ker A)^\perp\to\operatorname{ran}A$.
We claim that 
\begin{equation}
\text{$A_\perp$ is a bijection from 
$(\ker A)^\perp$
to $\operatorname{ran}A$}. 
\end{equation}
Indeed, $A_\perp$ is 
injective because $A_\perp x=0$ $\siff$ $x\in (\ker A)^\perp\cap\ker A=\{0\}$. 
Moreover,  let $y\in\operatorname{ran}A$. Then
$(\exists u\in X)$ with $Au=y$.
Set $w=u-P_{\ker A}u\in(\ker A)^\perp$.
Then, $Aw=A(u-P_{\ker A}u)=y$.
Hence, $A_\perp$ is surjective.
Since $\operatorname{ran}A$ is closed, $A_\perp$ is a bounded bijection between
two Banach spaces, hence $A_\perp^{-1}$ is bounded by the Bounded Inverse Theorem.
Therefore
\begin{equation}
\label{eq:binv2}
x^{\perp}
= A_\perp^{-1}(Ax^{\perp})
= A_\perp^{-1}(Ax).
\end{equation}
Combining \cref{eq:binv1} and \cref{eq:binv2} yields
\begin{equation}
\dist{x}{\ker A}
=\|x^{\perp}\|
=\bigl\|A_\perp^{-1}(Ax)\bigr\|
\le \|A_\perp^{-1}\|\,\|Ax\|.
\end{equation}
Setting $C:=\|A_\perp^{-1}\|$ yields the claim.
\end{proof}

We conclude this section with the  following fact and the subsequent corollary, which play a critical role in the convergence analysis.
\begin{fact}
\label{fact:fun:linalg}
Let $Y$ be a real Hilbert space.
 Suppose that $U$ is a closed linear subspace of $X$,
that $A:X\to Y$ is linear and that $\ran A $ is closed.
Then the following are equivalent\footnote{Let $U$ and $V$ be closed linear subspaces of $X$.
The \emph{cosine of the Friedrichs angle} between $U$ and $V$ is
$c_F
:= \sup\Big\{\scal{u}{v}\ \Big|\ u\in U\cap (U\cap V)^{\perp},\ v\in V\cap (U\cap V)^{\perp},\ \norm{u}\le 1,\ \norm{v}\le 1\Big\}$.}.
\begin{enumerate}
\item 
\label{fact:fun:linalg:i}
$c_F(U,\ker A)<1$.
\item 
\label{fact:fun:linalg:ii}
$U+\ker A$ is closed.
\item 
\label{fact:fun:linalg:iii}
$A(U)$ is closed.
\end{enumerate}   
\end{fact}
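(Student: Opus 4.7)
\textbf{Proof proposal for \cref{fact:fun:linalg}.}
The plan is to set $V:=\ker A$, handle \cref{fact:fun:linalg:ii}$\siff$\cref{fact:fun:linalg:iii} by a Bounded Inverse Theorem argument parallel to \cref{prop:aux}, and then obtain \cref{fact:fun:linalg:i}$\siff$\cref{fact:fun:linalg:ii} by invoking the classical Deutsch--Mercer characterization of the closedness of a sum of two closed subspaces via the Friedrichs angle (available in, e.g., \cite{BC2017}).

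First I would prove \cref{fact:fun:linalg:ii}$\siff$\cref{fact:fun:linalg:iii}. Decompose $X=V\oplus V^\perp$ and consider the restriction $A_\perp:=A|_{V^\perp}:V^\perp\to\ran A$. Since $\ran A$ is closed by hypothesis, the argument of \cref{prop:aux} shows that $A_\perp$ is a bounded linear bijection, hence a topological isomorphism by the Bounded Inverse Theorem. Because $Au=A(P_{V^\perp}u)$ for every $u\in X$, we have $A(U)=A_\perp(P_{V^\perp}(U))$, so $A(U)$ is closed in $\ran A$ iff $P_{V^\perp}(U)$ is closed in $V^\perp$; and since $\ran A$ is itself closed in $Y$, this is equivalent to closedness of $A(U)$ in $Y$.

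Next I would reduce closedness of $U+V$ to closedness of $P_{V^\perp}(U)$. A short verification gives $U+V=P_{V^\perp}(U)+V$. Using the orthogonal decomposition $X=V^\perp\oplus V$: if $P_{V^\perp}(U)$ is closed, then so is its orthogonal sum with the closed subspace $V$; conversely, if $P_{V^\perp}(U)+V$ is closed and $w_n\to w$ with $w_n\in P_{V^\perp}(U)$, then $w\in P_{V^\perp}(U)+V$, and decomposing $w=w'+v'$ with $w'\in P_{V^\perp}(U)\subseteq V^\perp$ and $v'\in V$ forces $v'\in V\cap V^\perp=\{0\}$, hence $w=w'\in P_{V^\perp}(U)$. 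Chaining the two equivalences gives \cref{fact:fun:linalg:ii}$\siff$\cref{fact:fun:linalg:iii}.

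For \cref{fact:fun:linalg:i}$\siff$\cref{fact:fun:linalg:ii}, I would simply appeal to the classical result that the sum of two closed linear subspaces of a real Hilbert space is closed if and only if the cosine of their Friedrichs angle is strictly less than $1$; this is precisely the characterization recorded in \cite{BC2017}, applied to $U$ and $V=\ker A$. The main obstacle, if one wanted a self-contained treatment, is precisely this last equivalence: the nontrivial direction requires showing that $c_F(U,V)<1$ implies a uniform ``angular'' lower bound that allows one to pass to the limit in decompositions $x_n=u_n+v_n$, which is a nontrivial functional-analytic statement and is the reason we cite it as a known fact rather than reprove it here.
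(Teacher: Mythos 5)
Your proposal is correct, and it differs from the paper in an instructive way: the paper's proof is pure citation, quoting \cite[Theorem~13]{Deutsch1995} for \cref{fact:fun:linalg:i}$\siff$\cref{fact:fun:linalg:ii} and \cite[Corollary~15.36]{BC2017} for \cref{fact:fun:linalg:ii}$\siff$\cref{fact:fun:linalg:iii}, whereas you reprove the second equivalence from scratch and only cite the literature for the first. Your argument for \cref{fact:fun:linalg:ii}$\siff$\cref{fact:fun:linalg:iii} checks out: with $V=\ker A$, the restriction $A_\perp=A|_{V^\perp}$ is a topological isomorphism onto the closed subspace $\ran A$ by the Bounded Inverse Theorem (exactly the mechanism of \cref{prop:aux}), the identities $A(U)=A_\perp\bigl(P_{V^\perp}(U)\bigr)$ and $U+V=P_{V^\perp}(U)+V$ hold, and your two-sided verification that the orthogonal sum $P_{V^\perp}(U)+V$ is closed if and only if $P_{V^\perp}(U)$ is closed is sound (in the converse direction you implicitly use that $w\in V^\perp$ as a limit of elements of the closed subspace $V^\perp$, which is what forces $v'=w-w'\in V\cap V^\perp=\{0\}$; worth stating explicitly). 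What your route buys is a self-contained derivation that reuses machinery already developed in the paper instead of importing \cite[Corollary~15.36]{BC2017}; what the paper's route buys is brevity. You are also right to identify \cref{fact:fun:linalg:i}$\siff$\cref{fact:fun:linalg:ii} as the genuinely nontrivial functional-analytic ingredient that is best quoted rather than reproved --- just note that the paper sources it from \cite{Deutsch1995} rather than \cite{BC2017}, so you may wish to point your citation there.
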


\begin{proof}
\cref{fact:fun:linalg:i}$\siff$\cref{fact:fun:linalg:ii}:
This is \cite[Theorem~13]{Deutsch1995}.
\cref{fact:fun:linalg:ii}$\siff$\cref{fact:fun:linalg:iii}:
This is \cite[Corollary~15.36]{BC2017}.
\end{proof}

\begin{corollary}
\label{cor:fun:linalg}
Let $Y$ be a real Hilbert space.
 Suppose that $U$ is a closed linear subspace of $X$,
that $A:X\to Y$ is linear and that $\ran A $ is closed.
Suppose that $U+\ker A$ is closed.
  Then $(\exists C\ge 0)$ such that $(\forall x\in U)$
  \begin{equation}
    \dist{x}{\ker A \cap U}=\dist{x}{\ker A_{|U}}\le C\norm{Ax}.    
  \end{equation}
\end{corollary}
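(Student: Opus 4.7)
\begin{namedproof}{Proof proposal for \cref{cor:fun:linalg}}
The plan is to reduce to \cref{prop:aux} by passing to the restriction $A|_U\colon U\to Y$, where $U$ is viewed as a real Hilbert space in its own right (closed linear subspace of $X$, equipped with the induced inner product). The desired inequality is exactly the conclusion of \cref{prop:aux} applied to $A|_U$, so the entire task is to verify that the hypotheses of \cref{prop:aux} transfer.

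First I would dispatch the leftmost equality in the display: by definition $\ker(A|_U)=\{x\in U\mid Ax=0\}=\ker A\cap U$, so the two distances $\dist{x}{\ker A\cap U}$ and $\dist{x}{\ker A_{|U}}$ coincide for every $x\in U$. This step is routine.

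Next I would check that the range of the restriction is closed. Note that $\ran(A|_U)=A(U)$. By \cref{fact:fun:linalg}, under the standing assumption that $\ran A$ is closed, the three conditions \cref{fact:fun:linalg:i}, \cref{fact:fun:linalg:ii}, \cref{fact:fun:linalg:iii} are equivalent; since the hypothesis of the corollary grants \cref{fact:fun:linalg:ii}, namely that $U+\ker A$ is closed, we conclude that \cref{fact:fun:linalg:iii} holds, i.e.\ $A(U)$ is closed. Thus the linear operator $A|_U\colon U\to Y$ is continuous and has closed range.

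Finally I would invoke \cref{prop:aux} with $(H_1,H_2,A)$ replaced by $(U,Y,A|_U)$: this yields a constant $C\ge 0$ such that for every $x\in U$,
\begin{equation}
\dist{x}{\ker(A|_U)}\le C\,\|(A|_U)x\|=C\,\|Ax\|,
\end{equation}
where the distance on the left is measured in the Hilbert space $U$; but since $\ker(A|_U)\subseteq U$, this is the same as the distance measured in $X$, and equals $\dist{x}{\ker A\cap U}$ by the first step. I do not foresee any real obstacle here: the content of the corollary is the passage from Proposition~\ref{prop:aux} to a restricted operator, and the closed-range condition for the restriction is precisely what \cref{fact:fun:linalg} delivers from the closedness of $U+\ker A$.
\end{namedproof}
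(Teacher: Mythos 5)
Your proposal is correct and follows exactly the paper's own argument: identify $\ker A_{|U}=\ker A\cap U$, use \cref{fact:fun:linalg} to deduce that $A(U)=\ran A_{|U}$ is closed from the closedness of $U+\ker A$, and then apply \cref{prop:aux} to the restriction $A_{|U}\colon U\to Y$. The additional remark that the distance computed in the Hilbert space $U$ agrees with the distance computed in $X$ is a fine (if routine) point of care that the paper leaves implicit.
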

\begin{proof}
Clearly, $\ker A_{|U}=\ker A\cap U$.
It follows from \cref{fact:fun:linalg} that $\ran A_{|U}= A(U)$
is closed.
Now apply \cref{prop:aux} with $H_1$ replaced by $U$
 and $A$ replaced by $ A_{|U}$.
\end{proof}

\begin{remark}
\label{rem:fun:linalg}
Suppose that $U$ and $V$ 
are closed affine subspaces of $X$.
Then $(\exists (u^\sperp,v^\sperp))\in (\parl U)^\perp \times (\parl V)^\perp$
such that $U=u^\sperp+\parl U$ and $V=v^\sperp+\parl V$. 
Hence, $U+V=u^\sperp+v^\sperp+\parl U+\parl V$.
Consequently,
\begin{equation}
    \text{$U+V$ is closed\; $\siff$\; $\parl U+\parl V$ is closed\; $\siff$\; $c_F(\parl U,\parl V)<1$.}
\end{equation}
\end{remark}

\section{The boundedness of the iterates of APG}
\label{sec:bdd}

The convergence—and hence boundedness—of the APG iterates is established in \cite[Proposition~2]{BFN2025} for an appropriately general selection of the parameter sequence that recovers the critical cases of \cite{Nest83,ChD15,BT2009}. See also \cite[Lemma~8]{JangRyu2025} for the AGD case in finite dimensions. For completeness, we record below a weaker argument for the boundedness of the APG iterates when the parameter sequence satisfies \cref{30.3}, which we developed independently while preparing this manuscript. We make no claim of priority; full credit for the result belongs to the recent works cited above.

In this section, we follow the notation used in \cite[Chapter~30]{BBM2023}. 

A sequence $(t_k)_{k \in \mathbb{N}}$ in $\mathbb{R}_{++}$ is a parameter sequence for APG if the following hold for every $k \in \mathbb{N}$:
\begin{subequations}
\label{30.2}
\begin{align}
t_k &\ge \frac{k+2}{2} \ge 1 = t_0,  
\label{eq:FS:i}
\\
t_k^{2} &\ge t_{k+1}^{2} - t_{k+1}. 
\label{30.2:b}
\end{align}
\end{subequations}

\begin{fact}
\label{thm:30.4}
Recall our assumptions~\cref{intro}, \cref{30.1d} and \cref{30.1e}.
Let $(t_k)_{k \in \mathbb{N}}$ be a parameter sequence for FISTA, i.e., \cref{30.2} holds.
Suppose that $S\neq \fady$.
Let $x_0 \in X$ and set $y_0 := x_0$.
Given $k \in \mathbb{N}$, update via
\begin{subequations}\label{30.3}
\begin{align}
x_{k+1} &:= T y_k = \prox_{\frac{1}{\lip} g} \big( y_k - \tfrac{1}{\lip} \nabla f(y_k) \big), \label{30.3a} \\
y_{k+1} &:= x_{k+1} + \frac{t_k - 1}{t_{k+1}} \big( x_{k+1} - x_k \big). \label{30.3b}
\end{align}
\end{subequations}
Then
$0 \leq F(x_k) - \mu \leq \frac{2L \distsq{x_0}{S}}{(k+1)^{2}}
= \mathcal{O}\!\left(\tfrac{1}{k^{2}}\right).$
In particular,
\begin{equation}
\label{eq:300}
F(x_k) \to \mu.
\end{equation}
\end{fact}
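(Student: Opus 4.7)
The statement to prove is the classical Beck--Teboulle convergence rate for FISTA, and since the excerpt attributes it to \cite[Chapter~30]{BBM2023}, my plan is to reproduce the standard Lyapunov--function argument, which uses only the descent lemma, the proximal inequality, and the growth condition on $(t_k)_\knn$ from \cref{30.2}.

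\textbf{Step 1: a one-step inequality for the proximal gradient operator.} First, I would establish the fundamental estimate that for every $y\in X$, if $x^+:=Ty=\prox_{g/\lip}(y-\tfrac1\lip \nabla f(y))$, then for every $x\in X$,
\begin{equation}
F(x^+)-F(x)\ \le\ \lip\scal{y-x^+}{x^+-x}+\tfrac{\lip}{2}\norm{x^+-y}^2.
\end{equation}
This follows by combining the descent lemma $f(x^+)\le f(y)+\scal{\nabla f(y)}{x^+-y}+\tfrac{\lip}{2}\norm{x^+-y}^2$ with the subgradient inequality characterizing the prox (namely $\lip(y-x^+)-\nabla f(y)\in \partial g(x^+)$) and with the convexity of $f$. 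Rewriting the right-hand side as $\tfrac{\lip}{2}(\norm{y-x}^2-\norm{x^+-x}^2)+\tfrac{\lip}{2}\norm{x^+-y}^2-\tfrac{\lip}{2}\norm{y-x^+}^2+\lip\scal{y-x^+}{x^+-x}$ (after completing squares) gives the more convenient
\begin{equation}
\tfrac{2}{\lip}\bigl(F(x)-F(x^+)\bigr)\ \ge\ \norm{x^+-x}^2+2\scal{x^+-y}{x-y}\ =\ \norm{x^+-y}^2 + \norm{y-x}^2 - \norm{x^+-x}^2 \quad \text{(rearrangement)},
\end{equation}
which I would then manipulate into the canonical FISTA form.

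\textbf{Step 2: Lyapunov function and monotonicity.} Fix $x^*\in S$ and set
\begin{equation}
V_k\ :=\ \tfrac{2}{\lip}\,t_k^{\,2}\bigl(F(x_k)-\mu\bigr)\ +\ \bigl\|\,t_k x_k-(t_k-1)x_{k-1}-x^*\,\bigr\|^2 \qquad(k\ge 1),
\end{equation}
with a suitable definition at $k=0$. The core of the proof is to show $V_{k+1}\le V_k$. To do this, I would apply the Step~1 inequality twice at $y=y_k$: once with $x=x_k$ (giving a bound on $F(x_{k+1})-F(x_k)$) and once with $x=x^*$ (giving a bound on $F(x_{k+1})-\mu$). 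Forming the combination $(t_{k+1}-1)\cdot[\text{first}]+[\text{second}]$, multiplying by $t_{k+1}$, and using the momentum update \cref{30.3b} to absorb $y_k$ into $x_k,x_{k-1}$, the quadratic-in-norm terms organize into the difference $V_k-V_{k+1}$ exactly when the coefficient inequality $t_{k+1}^2-t_{k+1}\le t_k^2$, i.e.\ \cref{30.2:b}, holds. The non-negative leftover term yields $V_{k+1}\le V_k$.

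\textbf{Step 3: telescoping and the rate.} From $V_k\le V_1$ and evaluation at $k=0,1$ (with $t_0=1$ collapsing the momentum term so that $V_1\le V_0 = \norm{x_0-x^*}^2$, using $F(x_1)-\mu\le \tfrac{\lip}{2}(\norm{x_0-x^*}^2-\norm{x_1-x^*}^2)$ from Step~1), I obtain
\begin{equation}
\tfrac{2}{\lip}\,t_k^{\,2}\bigl(F(x_k)-\mu\bigr)\ \le\ V_k\ \le\ \norm{x_0-x^*}^2.
\end{equation}
Taking the infimum over $x^*\in S$ yields $\norm{x_0-x^*}^2=\distsq{x_0}{S}$ (here $S$ is closed convex and nonempty, so the infimum is attained), and invoking $t_k\ge (k+2)/2$ from \cref{eq:FS:i} gives the stated rate
\begin{equation}
F(x_k)-\mu\ \le\ \frac{2\lip\,\distsq{x_0}{S}}{(k+1)^2}.
\end{equation}
The lower bound $F(x_k)-\mu\ge 0$ is immediate from $\mu=\min F(X)$, and \cref{eq:300} follows.

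\textbf{Main obstacle.} The only delicate point is the algebraic manipulation in Step~2: one must choose the weights $(t_{k+1}-1)$ and $1$ in the convex combination so that (a) the linear-in-$\scal{\cdot}{\cdot}$ terms assemble into a perfect square involving $t_{k+1}x_{k+1}-(t_{k+1}-1)x_k-x^*$, and (b) the coefficient in front of the residual term $\norm{x_{k+1}-x^*}^2$ is exactly the slack $t_k^2-(t_{k+1}^2-t_{k+1})\ge 0$ afforded by \cref{30.2:b}. This is the Beck--Teboulle bookkeeping and is the only place where the parameter condition \cref{30.2} is used.
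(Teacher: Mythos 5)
You should note first that the paper does not actually prove this statement: it is recorded as a \emph{Fact}, and the entire proof is the one-line citation ``This is \cite[Theorem~4.4]{BT2009}.'' Your proposal instead reconstructs the argument of that cited source, and it is the standard one: the one-step proximal-gradient inequality, the Lyapunov quantity $t_k^2(F(x_k)-\mu)+\tfrac{\lip}{2}\|\cdot\|^2$, monotonicity via \cref{30.2:b}, and the rate via $t_k\ge\tfrac{k+2}{2}$ from \cref{eq:FS:i}. This is also, in substance, exactly the computation the paper itself carries out in \cref{lem:key} (proved in Appendix~A for the purpose of boundedness), where the chain $0\le\xi_{k+1}\le\xi_k\le\cdots\le\xi_1\le\tfrac{\lip}{2}\norm{x_0-x^*}^2$ is established; the stated rate then follows upon taking $x^*=P_Sx_0$ and using $t_{k}\ge\tfrac{k+2}{2}$. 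So your route is sound and coincides with both the cited reference and the paper's own auxiliary lemma; what you gain over the paper's treatment is a self-contained derivation rather than an external citation.

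One local slip worth fixing: in the second display of your Step~1, the asserted identity $\norm{x^+-x}^2+2\scal{x^+-y}{x-y}=\norm{x^+-y}^2+\norm{y-x}^2-\norm{x^+-x}^2$ is false (the left-hand side equals $\norm{x^+-y}^2+\norm{x-y}^2$), and the middle expression is not the correct lower bound either. The correct consequence of your first display is $\tfrac{2}{\lip}\bigl(F(x)-F(x^+)\bigr)\ge 2\scal{x^+-y}{x^+-x}-\norm{x^+-y}^2=\norm{x^+-x}^2-\norm{x-y}^2$, i.e.\ precisely \cref{lem:29.2} of the paper, which is the ``canonical FISTA form'' you need. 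Since Steps~2 and~3 only use that corrected inequality, the error is cosmetic and does not affect the structure or validity of the argument.
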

\begin{proof}
This is \cite[Theorem~4.4]{BT2009}.
\end{proof}

\begin{fact}
\label{lem:29.2}
Recall \cref{intro} and \cref{30.1e}.
Let $(x,y)\in X\times X$, and set $y_{+} := T y$. Then
\begin{equation}\label{29.4}
F(x) - F(y_{+}) \ge \frac{\lip}{2} \|x - y_{+}\|^{2} - \frac{\lip}{2} \|x - y\|^{2}.
\end{equation}
\end{fact}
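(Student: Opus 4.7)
This is the standard three-point descent inequality underlying the analysis of accelerated proximal gradient methods, and the plan is to derive it by combining (i) the descent lemma for the smooth part $f$, (ii) the subgradient inequality for $f$ at the auxiliary point $y$, and (iii) the subgradient inequality for $g$ at the proximal point $y_+$, then to collapse the cross terms via the polarization identity.

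First, I would use the $\lip$-smoothness of $f$ together with convexity to record two inequalities at $y$: the descent lemma
\begin{equation*}
f(y_+)\le f(y)+\scal{\grad f(y)}{y_+-y}+\tfrac{\lip}{2}\norm{y_+-y}^2,
\end{equation*}
and the gradient inequality $f(x)\ge f(y)+\scal{\grad f(y)}{x-y}$. Subtracting and cancelling the common $f(y)$ gives
\begin{equation*}
f(x)-f(y_+)\ge \scal{\grad f(y)}{x-y_+}-\tfrac{\lip}{2}\norm{y_+-y}^2.
\end{equation*}

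Next, I would exploit the prox identity. By definition of $T$ in \cref{30.1e}, $y_+=\prox_{g/\lip}\!\big(y-\tfrac{1}{\lip}\grad f(y)\big)$, which is equivalent to $\lip(y-y_+)-\grad f(y)\in \partial g(y_+)$. Applying the subgradient inequality to $g$ at $y_+$ evaluated at $x$ yields
\begin{equation*}
g(x)-g(y_+)\ge \scal{\lip(y-y_+)-\grad f(y)}{x-y_+}.
\end{equation*}
Adding this to the previous display, the $\scal{\grad f(y)}{x-y_+}$ terms telescope, leaving
\begin{equation*}
F(x)-F(y_+)\ge \lip\scal{y-y_+}{x-y_+}-\tfrac{\lip}{2}\norm{y_+-y}^2.
\end{equation*}

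Finally, I would apply the polarization identity $2\scal{a}{b}=\norm{a}^2+\norm{b}^2-\norm{a-b}^2$ with $a=y-y_+$ and $b=x-y_+$ to rewrite
\begin{equation*}
\lip\scal{y-y_+}{x-y_+}=\tfrac{\lip}{2}\norm{y-y_+}^2+\tfrac{\lip}{2}\norm{x-y_+}^2-\tfrac{\lip}{2}\norm{x-y}^2.
\end{equation*}
The $\tfrac{\lip}{2}\norm{y-y_+}^2$ term cancels against the descent-lemma remainder, producing exactly \cref{29.4}. There is no real obstacle: each ingredient is a textbook convex-analysis inequality, and the only bookkeeping task is to verify that the cross term telescopes and that the polarization identity is applied with the correct signs. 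The proof is valid in any real Hilbert space, which is the setting we work in.
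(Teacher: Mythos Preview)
Your proof is correct and is precisely the standard derivation of this three-point descent inequality; the paper itself does not supply a proof but simply cites \cite[Lemma~29.2]{BBM2023}, where the argument you wrote out is exactly the one given.
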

\begin{proof}
See, e.g., \cite[Lemma~29.2]{BBM2023}.
\end{proof}

The conclusion of the following lemma
appeared in the proof of 
\cite[Theorem~30.2]{BBM2023}. (See also \cite[Proposition~1]{BFN2025}.)
We include the proof in \cref{app:0} for the sake of completeness.
\begin{lemma}
\label{lem:key}
Let $x_0\in X$
 and let $(x_k)_{k\in\NN}$
 and $(y_k)_{k\in\NN}$ be the sequences 
 generated by the APG algorithm in \cref{30.3}.
 Suppose that $S\neq \fady$.
 Let $x^*\in \fix T=S$.
 We set\footnote{In passing, we point out that the sequence $(z_k)_\knn$ defined in \cref{eq:zk-def} is called the \emph{auxiliary sequence} in \cite{BFN2025}.}  
 \begin{subequations}\label{eq:305}
\begin{align}
\delta_k &:= F(x_k) - \mu = F(x_k) - F(x^*) \ge 0 \quad (k \ge 0), \label{eq:delta-def}\\
z_k &:= t_k y_k + (1 - t_k) x_k \quad (k \ge 1), \label{eq:zk-def}\\
\xi_{k+1}&:=t_k^{2}\,\delta_{k+1}
+\tfrac{\lip}{2}\norm{z_{k+1}-x^*}^2\quad (k \ge 0).
\label{eq:def:xi}
\end{align}
\end{subequations}
 
Let $k\in\NN$.
Then\footnote{In passing, we acknowledge that
the inequalities in \cref{e:ineq1} are explicitly stated inside the proof of \cite[Theorem~30.2]{BBM2023}.} 
 \begin{equation}
 \label{e:ineq1}
0\le \ldots\le \xi_{k+1}\le \xi_{k}\le \ldots \le \xi_1\le \frac{\lip}{2}
\norm{x_0-x^*}^2.
 \end{equation}
 Moreover, the following hold.
 \begin{enumerate}
     \item 
     \label{lem:key:i}
     $(\xi_k)_{k\in\NN}$
 converges.
 \item 
 \label{lem:key:ii}
$(\norm{z_{k}-x^*}^2)_{k\ge 1}$
is bounded. More precisely,
$(\forall k\ge 1)$
 \begin{equation}
\norm{z_k-x^*}\le 
\norm{x_0-x^*}.
 \end{equation}
 In particular, $(z_k)_{k\in\NN}$ is bounded.
\end{enumerate}
\end{lemma}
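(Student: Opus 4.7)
The plan is to prove the telescoping chain \cref{e:ineq1} by two applications of \cref{lem:29.2} matched carefully to the definition of $z_k$. First, I apply \cref{lem:29.2} with $y=y_k$ (so that $y_+=Ty_k=x_{k+1}$) at the two points $x=x_k$ and $x=x^*$. Using the algebraic rewrite $\tfrac{L}{2}\|y_+-x\|^2-\tfrac{L}{2}\|y-x\|^2=\tfrac{L}{2}\|y_+-y\|^2+L\langle y_+-y,\,y-x\rangle$, these take the inner-product form
\begin{align*}
\delta_k-\delta_{k+1} &\ge \tfrac{L}{2}\|x_{k+1}-y_k\|^2+L\langle x_{k+1}-y_k,\,y_k-x_k\rangle,\\
-\delta_{k+1} &\ge \tfrac{L}{2}\|x_{k+1}-y_k\|^2+L\langle x_{k+1}-y_k,\,y_k-x^*\rangle.
\end{align*}
Multiplying the first by $(t_k-1)\ge 0$, adding the second, and scaling by $t_k$, I obtain
\begin{equation*}
\tfrac{2}{L}\bigl[t_k(t_k-1)\delta_k-t_k^2\delta_{k+1}\bigr]\ge t_k^2\|x_{k+1}-y_k\|^2+2t_k(t_k-1)\langle x_{k+1}-y_k,\,y_k-x_k\rangle+2t_k\langle x_{k+1}-y_k,\,y_k-x^*\rangle.
\end{equation*}

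The central step---and where I expect the main obstacle---is identifying this right-hand side as exactly $\|z_{k+1}-x^*\|^2-\|z_k-x^*\|^2$. To verify it, I first derive the compact form $z_{k+1}=t_kx_{k+1}-(t_k-1)x_k$ by substituting the APG inertial update \cref{30.3b} into \cref{eq:zk-def}; together with $z_k=t_ky_k-(t_k-1)x_k$, this yields the two identities $z_{k+1}-z_k=t_k(x_{k+1}-y_k)$ and $z_k-x^*=(t_k-1)(y_k-x_k)+(y_k-x^*)$. The claim then follows from the expansion $\|z_{k+1}-x^*\|^2-\|z_k-x^*\|^2=\|z_{k+1}-z_k\|^2+2\langle z_{k+1}-z_k,\,z_k-x^*\rangle$ upon direct substitution: the quadratic term produces $t_k^2\|x_{k+1}-y_k\|^2$, and expanding the inner product against the two summands of $z_k-x^*$ produces both cross terms above.

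With this identity in hand, I have $t_k(t_k-1)\delta_k+\tfrac{L}{2}\|z_k-x^*\|^2\ge\xi_{k+1}$. Invoking the parameter condition \cref{30.2:b} at index $k-1$, namely $t_{k-1}^2\ge t_k(t_k-1)$, together with $\delta_k\ge 0$, gives $\xi_k=t_{k-1}^2\delta_k+\tfrac{L}{2}\|z_k-x^*\|^2\ge t_k(t_k-1)\delta_k+\tfrac{L}{2}\|z_k-x^*\|^2\ge\xi_{k+1}$ for all $k\ge 1$. For the base case $\xi_1\le\tfrac{L}{2}\|x_0-x^*\|^2$ I use $t_0=1$ and $y_0=x_0$: \cref{lem:29.2} with $(x,y)=(x^*,x_0)$ yields $-\delta_1\ge\tfrac{L}{2}\|x_1-x^*\|^2-\tfrac{L}{2}\|x_0-x^*\|^2$, and since $z_1=t_0x_1-(t_0-1)x_0=x_1$, this rearranges to the required inequality. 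Chaining the two bounds yields \cref{e:ineq1}.

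Both conclusions are then immediate. For \cref{lem:key:i}, the sequence $(\xi_k)_{k\in\mathbb{N}}$ is nonincreasing and bounded below by $0$ (since $\delta_k\ge 0$ and $\|z_k-x^*\|^2\ge 0$), hence convergent. For \cref{lem:key:ii}, \cref{e:ineq1} yields $\tfrac{L}{2}\|z_k-x^*\|^2\le\xi_k\le\xi_1\le\tfrac{L}{2}\|x_0-x^*\|^2$, so $\|z_k-x^*\|\le\|x_0-x^*\|$ for every $k\ge 1$; in particular $(z_k)_{k\in\mathbb{N}}$ is bounded.
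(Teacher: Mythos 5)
Your proof is correct and follows essentially the same route as the paper: both hinge on Fact~\ref{lem:29.2}, the identity $z_{k+1}=t_kx_{k+1}-(t_k-1)x_k$, the parameter condition \eqref{30.2:b}, and the same telescoping of $\xi_k$ with the identical base-case argument for $\xi_1$. The only (cosmetic) difference is that the paper applies Fact~\ref{lem:29.2} once at the convex combination $\tfrac{1}{t_k}x^*+(1-\tfrac{1}{t_k})x_k$ and invokes convexity of $F$ directly, whereas you apply it twice (at $x_k$ and at $x^*$) and form the convex combination of the resulting inequalities with weights $t_k-1$ and $1$ — an equivalent deployment of the same convexity.
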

\begin{proof}
See \cref{app:0}.
\end{proof}

We are now ready to prove the key  result 
in this section, namely the boundedness of  the iterates of APG. 
Let $(x,y)\in X\times X$.
We will make use of the following
inequality:
\begin{equation}
\label{ineq:cute}
2\scal{y-x}{y}\ge \norm{y}^2-\norm{x}^2.
\end{equation}

\begin{theorem}
\label{thm:bddfista}
Suppose that $S\neq \fady$.
Let $x_0\in X$
 and let $(x_k)_{k\in\NN}$
 and $(y_k)_{k\in\NN}$ be the sequences generated by the APG algorithm, see \cref{30.3}.
 Let $x^*\in \fix T=S$.
 Then the following hold.
 \begin{enumerate}
\item
\label{thm:bddfista:i}
$(\forall k\in\NN)$ $\norm{x_{k}-x^*}\le \norm{x_0-x^*}$.
\item 
\label{thm:bddfista:ii}
The sequence $(x_k)_{k\in\NN}$ is bounded.
Moreover, every weak cluster point of 
$(x_k)_{k\in\NN}$ lies in $S$.
 \end{enumerate}
\end{theorem}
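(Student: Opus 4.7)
\textbf{Proof plan for \cref{thm:bddfista}.}
The plan for part \cref{thm:bddfista:i} is to prove the inequality by induction on $k$, the crucial input being \cref{lem:key}\cref{lem:key:ii}, which gives $\norm{z_k-x^*}\le\norm{x_0-x^*}$. The base case $k=0$ is trivial. For the inductive step, I would first apply \cref{lem:29.2} with $(x,y)$ replaced by $(x^*,y_k)$, noting that $y_+=Ty_k=x_{k+1}$. Since $F(x^*)=\mu\le F(x_{k+1})$, the left-hand side of \cref{29.4} is nonpositive, so the inequality rearranges to
\begin{equation}
\norm{x_{k+1}-x^*}\le \norm{y_k-x^*}.
\end{equation}
It therefore suffices to bound $\norm{y_k-x^*}$ by $\norm{x_0-x^*}$.

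The key algebraic identity is to read \cref{eq:zk-def} as expressing $y_k$ as a convex combination of $z_k$ and $x_k$: since $t_k\ge 1$ by \cref{eq:FS:i}, the relation $z_k=t_ky_k+(1-t_k)x_k$ is equivalent to
\begin{equation}
y_k=\tfrac{1}{t_k}\,z_k+\bigl(1-\tfrac{1}{t_k}\bigr)x_k,\qquad \tfrac{1}{t_k}\in\,\opint{0,1}.
\end{equation}
Combined with the triangle inequality, the inductive hypothesis $\norm{x_k-x^*}\le \norm{x_0-x^*}$, and \cref{lem:key}\cref{lem:key:ii}, this yields
\begin{equation}
\norm{y_k-x^*}\le \tfrac{1}{t_k}\norm{z_k-x^*}+\bigl(1-\tfrac{1}{t_k}\bigr)\norm{x_k-x^*}\le \norm{x_0-x^*},
\end{equation}
closing the induction. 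A small caveat is that $z_k$ is defined only for $k\ge 1$, so the $k=0$ step $\norm{x_1-x^*}\le\norm{y_0-x^*}=\norm{x_0-x^*}$ should be handled separately using $y_0=x_0$ before invoking the convex-combination argument for $k\ge 1$.

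For part \cref{thm:bddfista:ii}, boundedness is immediate from \cref{thm:bddfista:i}. For the cluster-point claim, I would appeal to \cref{thm:30.4}, which gives $F(x_k)\to\mu$. Since $f$ is convex and continuous and $g$ is convex, lower semicontinuous, and proper, $F=f+g$ is convex and lower semicontinuous, hence \emph{weakly} lower semicontinuous on $X$. If $\bar{x}$ is any weak cluster point of $(x_k)_\knn$, say $x_{n_k}\weakly \bar{x}$, then
\begin{equation}
F(\bar{x})\le \liminf_{k\to\infty}F(x_{n_k})=\mu=\min F(X),
\end{equation}
so $F(\bar{x})=\mu$ and $\bar{x}\in S$. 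I do not anticipate any serious obstacle: essentially all the technical work has been absorbed into \cref{lem:key} and \cref{thm:30.4}; the remaining content is the convex-combination observation enabled by $t_k\ge 1$ together with weak lower semicontinuity of $F$.
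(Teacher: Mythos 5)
Your proposal is correct. For part \cref{thm:bddfista:i} you follow the same overall strategy as the paper---induction on $k$ anchored on the bound $\norm{z_k-x^*}\le\norm{x_0-x^*}$ from \cref{lem:key}\cref{lem:key:ii}---but you execute the inductive step differently. The paper writes $z_{k+1}=x_{k+1}+(t_k-1)(x_{k+1}-x_k)$, expands $\norm{z_{k+1}-x^*}^2$ and applies the elementary inequality \cref{ineq:cute} to arrive at $t_k\norm{x_{k+1}-x^*}^2\le(t_k-1)\norm{x_k-x^*}^2+\norm{x_0-x^*}^2$; in effect it exploits that $x_{k+1}$ is a convex combination of $z_{k+1}$ and $x_k$ with weights $\tfrac{1}{t_k}$ and $1-\tfrac{1}{t_k}$. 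You instead exploit that $y_k$ is a convex combination of $z_k$ and $x_k$ with the same weights (valid for $k\ge1$, with the $k=0$ step handled via $y_0=x_0$, as you correctly flag), and then transfer the resulting bound from $y_k$ to $x_{k+1}=Ty_k$ via \cref{lem:29.2}, i.e., via the quasi-nonexpansiveness of $T$ at minimizers. Your route imports one extra ingredient (\cref{lem:29.2}, which the paper's proof of this particular theorem does not invoke, although it is used in the proof of \cref{lem:key}) but avoids the square expansion entirely and is arguably more transparent; the paper's computation additionally produces the nonnegative term $(t_k-1)^2\norm{x_{k+1}-x_k}^2$, which it then simply discards. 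Part \cref{thm:bddfista:ii} of your argument is essentially word-for-word the paper's: boundedness from \cref{thm:bddfista:i}, then weak lower semicontinuity of the convex lower semicontinuous function $F$ combined with $F(x_k)\to\mu$ from \cref{thm:30.4}.
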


\begin{proof}
\cref{thm:bddfista:i}:
We proceed by induction on $k$.
The base case, when $k=0$ is clear.
Now suppose for some $k\in\NN$
that
\begin{equation}
\label{eq:induchyp}
\norm{x_{k}-x^*}\le \norm{x_0-x^*}.
\end{equation}
Now let $(z_k)_{k\in\NN}$
be defined as in \cref{lem:key}.
Then \cref{ineq:cute}, applied with $(x,y)$
replaced by $(x_{k}-x^*,x_{k+1}-x^*)$, in view of \cref{eq:FS:i}, yields
\begin{subequations}
    \begin{align}
    \norm{x_0-x^*}^2
    &\ge \norm{z_{k+1}-x^*}^2=\norm{t_{k+1} y_{k+1} +(1-t_{k+1} )x_{k+1} -x^* }^2 
    \\
    &=\norm{x_{k+1}-x^*+(t_{k}-1) (x_{k+1}-x_{k})  }^2 
    \\
    &=\norm{x_{k+1}-x^*}^2+(t_{k}-1)^2\norm{x_{k+1}-x_{k}}^2
    \nonumber
    \\
    &\quad+2(t_{k}-1)\scal{x_{k+1}-x^*}{x_{k+1}-x_{k}}
    \\
    &
    \ge 
    \norm{x_{k+1}-x^*}^2
    +(t_{k}-1)\big( \norm{x_{k+1}-x^*}^2-\norm{x_{k}-x^*}^2\big)
    \\
    &\quad +(t_{k}-1)^2\norm{x_{k+1}-x_{k}}^2
                \\
    &= 
    t_k\norm{x_{k+1}-x^*}^2
    -(t_{k}-1)\norm{x_{k}-x^*}^2+(t_{k}-1)^2\norm{x_{k+1}-x_{k}}^2.
    \end{align}
\end{subequations}
Rearranging 
yields
\begin{equation}
\label{eq:48}
        t_k\norm{x_{k+1}-x^*}^2
    +(t_{k}-1)^2\norm{x_{k+1}-x_{k}}^2
    \le 
    (t_{k}-1)\norm{x_{k}-x^*}^2+ \norm{x_0-x^*}^2.
\end{equation}

Using \cref{eq:48} and \cref{eq:induchyp} we conclude that
\begin{subequations}
    \begin{align}
   \norm{x_{k+1}-x^*}^2
   &\le 
   \tfrac{1}{t_k}\big((t_{k}-1)\norm{x_{k}-x^*}^2+ \norm{x_0-x^*}^2\big)
   \\
   &   \le\tfrac{1}{t_k}\big((t_{k}-1)\norm{x_0-x^*}^2+ \norm{x_0-x^*}^2\big)
   \\
   &=\norm{x_0-x^*}^2.
    \end{align}
\end{subequations}
\cref{thm:bddfista:ii}:
The
boundedness of $(x_k)_{k\in\NN}$ is a direct 
consequence of \cref{thm:bddfista:i}.
Now let $\overline{x}$
be a weak cluster point of
$(x_k)_{k\in\NN}$, say $x_{n_k}\weakly \overline{x}$.
The (weak) lower semicontinuity of $F$
and \cref{eq:300}
imply 
\begin{equation}
   \mu\le F(\overline{x})\le \underline{\lim}\ F(x_{n_k}) =\lim F(x_{n_k})=\lim F(x_{k})\to \mu.
\end{equation}
Therefore, $F(\overline{x})=\mu$ and hence $\overline{x}\in S$ as claimed.
\end{proof}

\section{Weak convergence in the affine-quadratic case}
\label{sec:weak}

Let $Y$ be a real Hilbert space, let $A:X\to Y$ be linear and continuous
and let $b\in Y$.
Throughout the rest of this paper
we assume that 
\begin{empheq}[box=\mybluebox]{align}
\label{assum:U}
\text{ $U$ is a closed affine subspace of $X$.}
\end{empheq}

In this section, we consider the problem of 
\begin{empheq}[box=\mybluebox]{align}
\label{eq:affineprob}
\text{minimize}\quad & \tfrac{1}{2}\,\|Ax-b\|^2
\nonumber\\
\text{subject to}\quad & x \in U .
\end{empheq}
We set
\begin{equation}
\label{eq:fgF}
   \text{$f=\tfrac{1}{2}\norm{A(\cdot)-b}^2$,\quad $g=\iota_U$,\quad $F=f+g$,\quad $S=\operatorname{Argmin} F$,} 
\end{equation}
 and observe that 
 \begin{equation}
     \nabla f=A^*(A(\cdot)-b).
 \end{equation}
 Hence,
 $\grad f$ is an affine operator and $\lip$-Lipschitz 
 continuous with a constant $\lip=\norm{A^*A}$.
 Moreover, $P_U$
 is also an affine operator by \cref{fact:Caffine}\cref{fact:Caffine:iii}.
In this case the proximal-gradient operator in \cref{30.1e}
reduces to
\begin{equation}
\label{eq:Tproxgrad}
T=P_U\big(\Id-\tfrac{1}{\lip}A^*(A(\cdot)-b)\big).
\end{equation}
In view of \cref{fact:Caffine}\cref{fact:Caffine:iii}, we write
$P_U=u_0+P_{\parl U}$ where $u_0=P_U0$.
By \cref{fact:Caffine}\cref{fact:Caffine:ii}, $P_{\parl U}$
is linear and \cref{eq:Tproxgrad} reduces to 
\begin{equation}
T=u_0+P_{\parl U}\Big(\Id-\tfrac{1}{\lip}A^*(A(\cdot)-b)\Big)
=\underbrace{u_0+\tfrac{1}{\lip}P_{\parl U}A^*b}_{=:q}
+\underbrace{P_{\parl U}\Big(\Id -\tfrac{1}{\lip}A^*A\Big)}_{=:L}.
\end{equation}

We will apply the results of \cref{sec:affine} by recalling \cref{eq:def:T}
and  setting 
\begin{equation}
\label{eq:L:q:def}
q:= u_0+\tfrac{1}{\lip}P_{\parl U}A^*b
\quad\text{and}\quad 
L:=P_{\parl U}(\Id-\tfrac{1}{\lip}(A^*A)).
\end{equation}

The following two  lemmas are part of the literature.
We include their short proofs for the sake of completeness.
\begin{lemma}
\label{lem:FBfacts}
Let $T$ be defined as in \cref{eq:Tproxgrad}
 and let $L$ be defined as in \cref{eq:L:q:def}.
Then the following hold.
\begin{enumerate}
\item 
\label{lem:FBfacts:i}
$S=\fix T$.
\item
\label{lem:FBfacts:ii}
$T$ is an averaged nonexpansive operator.
\item 
\label{lem:FBfacts:iii}
$L$ is an averaged nonexpansive operator.
\end{enumerate}
\end{lemma}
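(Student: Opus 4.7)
\textbf{Proof plan for \cref{lem:FBfacts}.} All three parts are standard folklore consequences of the proximal-gradient calculus, so the plan is simply to assemble the relevant ingredients and invoke the appropriate citations to \cite{BC2017}; no genuine obstacle is anticipated.

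For \cref{lem:FBfacts:i}, I would reduce $S=\fix T$ to the familiar fixed-point reformulation of the first-order optimality condition for composite minimization. Concretely, under \cref{intro}, $x\in S$ if and only if $0\in\nabla f(x)+\partial g(x)$; rewriting this as $x-\tfrac{1}{\lip}\nabla f(x)\in x+\tfrac{1}{\lip}\partial g(x)$ and applying $\prox_{(1/\lip)g}=(\Id+\tfrac{1}{\lip}\partial g)^{-1}$ on both sides gives $x=\prox_{(1/\lip)g}(x-\tfrac{1}{\lip}\nabla f(x))=Tx$, and vice versa. This is the content of, e.g., \cite[Proposition~26.1(iv)]{BC2017} specialized to the stepsize $1/\lip$; I would simply cite this.

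For \cref{lem:FBfacts:ii}, the plan is to write $T$ as a composition of two averaged operators and apply the composition rule. The first factor $\prox_{(1/\lip)g}$ is firmly nonexpansive, hence $\tfrac12$-averaged, by \cite[Proposition~12.28]{BC2017}. For the second factor $\Id-\tfrac{1}{\lip}\nabla f$, the Baillon--Haddad theorem \cite[Corollary~18.17]{BC2017} gives that $\nabla f$ is $\tfrac{1}{\lip}$-cocoercive, so $\tfrac{1}{\lip}\nabla f$ is firmly nonexpansive, and therefore $\Id-\tfrac{1}{\lip}\nabla f$ is also $\tfrac12$-averaged by \cite[Proposition~4.4]{BC2017}. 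Finally, the composition of two averaged operators is averaged by \cite[Proposition~4.46]{BC2017}. (In the affine-quadratic case of \cref{eq:fgF}, this is just a sanity check that the abstract assumptions \cref{eq:L:assm}--\cref{eq:def:T} of \cref{sec:affine} apply.)

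For \cref{lem:FBfacts:iii}, I would simply invoke \cref{rem:nonnon}: since $T=q+L$ with $q\in X$ and $L$ linear, and since we have just shown $T$ to be averaged nonexpansive, the remark immediately yields that $L$ is averaged nonexpansive as well. The only mild point worth checking is that $L$ is indeed the linear part of $T$ as presented in \cref{eq:L:q:def}, which has been verified in the paragraph defining these quantities. No step of the proof appears delicate.
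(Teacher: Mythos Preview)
Your proposal is correct and follows essentially the same approach as the paper. The paper dispatches \cref{lem:FBfacts:i} and \cref{lem:FBfacts:ii} by single citations to \cite[Corollary~27.3(viii)]{BC2017} and \cite[Proposition~26.1(iv)(d)]{BC2017} respectively (the latter already packaging your Baillon--Haddad plus composition-of-averaged argument), and handles \cref{lem:FBfacts:iii} exactly as you do, via \cref{rem:nonnon}.
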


\begin{proof}
  \cref{lem:FBfacts:i}:
This is \cite[Corollary~27.3(viii)]{BC2017}.
  \cref{lem:FBfacts:ii}:
This is \cite[Proposition~26.1(iv)(d)]{BC2017}.
\cref{lem:FBfacts:iii}:
Combine   \cref{lem:FBfacts:ii}
 and \cref{rem:nonnon}.
\end{proof}

\begin{lemma} 
\label{lem:proxgradlim}
Suppose that $S\neq \fady$.
Let $p_0\in X$  and $(\forall \knn)$
update via
\begin{equation}
p_{k+1}=Tp_k.
\end{equation}
Then $(p_k )_\knn$ converges strongly to $P_{S}p_0$.
\end{lemma}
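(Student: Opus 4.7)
The plan is to exploit the affine structure of $T$ in \cref{eq:Tproxgrad}--\cref{eq:L:q:def} and apply the spectral theorem to the linear part. Since $S=\fix T$ is nonempty by \cref{lem:FBfacts}\cref{lem:FBfacts:i}, pick any $s\in S$. From $Ts=s$ we obtain $q=(\Id-L)s$, so $p_{k+1}-s=Lp_k+q-s=L(p_k-s)$, and a trivial induction gives $p_k-s=L^k(p_0-s)$ for every $\knn$. Hence it suffices to prove that $L^k w\to P_{\fix L}\,w$ strongly for every $w\in X$, since then $p_k\to s+P_{\fix L}(p_0-s)=P_{S}\,p_0$ by \cref{lem:foundation}\cref{lem:foundation:iv} combined with \cref{fact:Caffine}\cref{fact:Caffine:iii}.

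Set $N:=\Id-\tfrac{1}{\lip}A^{*}A$, so that $L=P_{\parl U}N$ and $\ran L\subseteq\parl U$; in particular $L^k w\in\parl U$ for all $k\ge 1$. Let $B:=A|_{\parl U}$, so that $B^{*}=P_{\parl U}A^{*}$, and consider the restriction $\til L:=L|_{\parl U}:\parl U\to\parl U$. A direct calculation shows that for every $v\in\parl U$,
\begin{equation*}
\til L v = P_{\parl U}\bigl(v-\tfrac{1}{\lip}A^{*}A v\bigr) = v-\tfrac{1}{\lip}B^{*}B\,v.
\end{equation*}
Since $B^{*}B$ is self-adjoint and positive on $\parl U$ with $\norm{B^{*}B}\le\norm{A^{*}A}=\lip$, the operator $\til L$ is self-adjoint on $\parl U$ with spectrum contained in $[0,1]$. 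The spectral theorem for bounded self-adjoint operators (applied to $t\mapsto t^k$ with its bounded pointwise limit $\chi_{\{1\}}$ on $[0,1]$) yields
\begin{equation*}
\til L^k v \;\longrightarrow\; P_{\ker(\Id-\til L)}\,v = P_{\ker B}\,v \quad\text{strongly, for every } v\in\parl U.
\end{equation*}

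To conclude, I would lift this back to $X$. For $w\in X$ and $k\ge 1$, $L^k w=\til L^{k-1}(Lw)\to P_{\ker B}(Lw)$ strongly. The decomposition
\begin{equation*}
w-Lw = (w-P_{\parl U}w) + \tfrac{1}{\lip}B^{*}(Aw)
\end{equation*}
places $w-Lw$ in $(\parl U)^{\perp}+\ran B^{*}\subseteq(\ker B)^{\perp}$ (using $\ker B\subseteq\parl U$ and $\ran B^{*}\perp\ker B$ within $\parl U$), hence $P_{\ker B}(Lw)=P_{\ker B}\,w$. Combined with the identity $\fix L=\ker B$ (immediate from $\til L v=v\iff B^{*}Bv=0\iff v\in\ker B$), this delivers $L^k w\to P_{\fix L}\,w$ strongly, and the lemma follows.

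The main technical point is that $L=P_{\parl U}N$ is \emph{not} self-adjoint on $X$ in general, since $P_{\parl U}$ and $N$ need not commute, so spectral methods do not apply directly. The decisive observation is that every iterate past the first lies in $\parl U$, on which the restriction $\til L=\Id_{\parl U}-\tfrac{1}{\lip}B^{*}B$ is self-adjoint and positive; this is precisely what allows the spectral theorem to deliver strong (rather than merely weak) convergence.
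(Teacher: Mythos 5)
Your proof is correct, but it takes a genuinely different route from the paper's. The paper's argument is essentially a two-step citation: since $T$ is averaged nonexpansive with $\fix T=S\neq\fady$ (\cref{lem:FBfacts}), the iteration is asymptotically regular ($p_{k+1}-p_k\to 0$) by \cite[Proposition~5.16]{BC2017}, and then \cite[Theorem~3.3]{BLM17} --- a general result on affine nonexpansive, asymptotically regular operators --- delivers strong convergence of $(T^kp_0)_\knn$ to $P_{\fix T}\,p_0$ directly. You instead exploit the specific affine-quadratic structure: after reducing to $p_k-s=L^k(p_0-s)$, you observe that although $L=P_{\parl U}\bigl(\Id-\tfrac{1}{\lip}A^*A\bigr)$ is not self-adjoint on $X$, its restriction to the invariant subspace $\parl U$ equals $\Id-\tfrac{1}{\lip}B^*B$ with $B=A|_{\parl U}$, which is self-adjoint with spectrum in $[0,1]$, so the spectral theorem gives $\widetilde{L}^k\to P_{\ker B}$ strongly; the lift back to $X$ via $w-Lw\in(\ker B)^\perp$ and the identification $\fix L=\ker B$ are both verified correctly (and are consistent with \cref{lem:parS}). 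The trade-off: your argument is self-contained modulo the spectral theorem but is tied to the quadratic-plus-affine-constraint form of $T$, whereas the paper's citation-based proof covers any affine averaged nonexpansive operator with a fixed point and is the mechanism that also drives \cref{thm:st:conv}\cref{thm:st:conv:i} in the general closed-range setting. Your closing remark correctly isolates why the naive spectral approach fails on $X$ and why restriction to $\parl U$ rescues it; that is the genuine content of your alternative proof.
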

\begin{proof}
It follows from \cref{lem:FBfacts}\cref{lem:FBfacts:i}\&\cref{lem:FBfacts:ii}
that $T$ is averaged nonexpansive and $\fix T\neq \fady$.
Consequently, $p_{k+1}-p_k\to 0$ by \cite[Proposition~5.16]{BC2017}, i.e.,
$T$ is asymptotically regular.
Now combine this with \cite[Theorem~3.3]{BLM17}
in view of \cref{lem:FBfacts}\cref{lem:FBfacts:i}.
\end{proof}

\begin{fact}
\label{fact:fistaBR}
Let $(t_k)_{k \in \mathbb{N}}$ in $\mathbb{R}_{++}$ be a sequence 
of parameters and set $(\alpha_k)_\knn=\big(\tfrac{t_{k}-1}{t_{k+1}}\big)$.
Let $T$ be defined as in \cref{eq:Tproxgrad} and 
let $(x_k)_\knn$
 and $(r_k)_\knn$ be defined as in \cref{prop:main}.
Suppose that $t_0=1$ and  one of the following holds.
\begin{enumerate}
\item 
\label{fact:fistaBR:i}
$(\forall \knn)$ $t^{2}_{k+1}-t^2_k=(1-\theta) t_{k+1}+\theta t_k$,\;  $\theta \in \left[0,1\right[$.
\item 
\label{fact:fistaBR:ii}
$(\forall \knn)$ $t_{k+1}=1+\tfrac{k}{\alpha-1}$ and $\alpha\ge 3$.
\item 
\label{fact:fistaBR:iii}
$(\forall \knn)$ $t^{2}_{k+1}-t^2_k\le  t_{k+1}$.
\end{enumerate}
Then $(x_k)_\knn$ converges weakly to a point in $S$.
In particular, $(x_k)_\knn$ is bounded.
\end{fact}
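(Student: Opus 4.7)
The statement is labeled as a Fact, so the proof will amount to a citation of recent weak convergence theorems for APG together with a short verification that the three parameter families fall within their scope. The plan is therefore to match each of (i)--(iii) to a known result and then invoke it.

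First, I would unpack each case. Case (ii) with $t_{k+1}=1+k/(\alpha-1)$ and $\alpha\ge 3$ is the Chambolle--Dossal inertial parametrization \cite{ChD15}; weak convergence of the associated APG iterates for $\alpha>3$ is classical and the boundary case $\alpha=3$ is covered by the more recent composite analyses. Case (i), i.e.\ $t_{k+1}^{2}-t_k^{2}=(1-\theta)t_{k+1}+\theta t_k$ with $\theta\in[0,1)$, is a sufficiently damped Nesterov-type recurrence handled by the inertial forward--backward framework of \cite{AttouchCabot2018}. Case (iii), $t_{k+1}^{2}-t_k^{2}\le t_{k+1}$, is the most permissive bound and in particular subsumes the original Nesterov/Beck--Teboulle choice $t_{k+1}=(1+\sqrt{1+4t_k^{2}})/2$, whose weak convergence in the composite Hilbert space setting has just been established by Bo{\c t}--Fadili--Nguyen \cite[Theorem~1 and Section~2.4]{BFN2025}.

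Next, I would verify that each of (i)--(iii), combined with $t_0=1$, implies the lower bound $t_k\ge (k+2)/2$ from \cref{eq:FS:i} (and the companion inequality \cref{30.2:b}) by a short induction: in case (iii) this is the classical FISTA estimate; in case (ii) it is immediate since $\alpha\le 3$ yields $1+k/(\alpha-1)\ge (k+2)/2$; and in case (i) it follows by estimating $t_{k+1}^{2}\ge t_k^{2}+(1-\theta)t_{k+1}$ and solving the resulting quadratic. With these structural hypotheses in place, a single invocation of \cite[Theorem~1, Section~2.4]{BFN2025} (supplemented by \cite{ChD15,AttouchCabot2018} for cases (ii) and (i) respectively) delivers the existence of a weak limit $\bar x\in S$ for $(x_k)_\knn$, and boundedness is then automatic.

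The main obstacle is purely bookkeeping: matching the notational conventions of each cited source (some of these works index the inertial coefficients slightly differently, e.g.\ $\alpha_k=(t_k-1)/t_{k+1}$ versus $\alpha_k=t_{k-1}/t_k$, and state convergence for either $(x_k)$ or $(y_k)$) to ensure that the weak limit asserted is indeed for the sequence $(x_k)_\knn$ defined in \cref{prop:main} with the present $(\alpha_k)_\knn$. Once this is confirmed, no further work is needed since the three cases are already covered individually in the cited literature.
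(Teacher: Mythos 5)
Your overall strategy---treat this as a Fact and discharge each case by citation---is exactly what the paper does, but one of your attributions fails in an essential way. For case \cref{fact:fistaBR:i}, setting $\theta=0$ in $t_{k+1}^{2}-t_k^{2}=(1-\theta)t_{k+1}+\theta t_k$ gives $t_{k+1}^{2}-t_{k+1}=t_k^{2}$, i.e., exactly the undamped Nesterov/Beck--Teboulle rule $t_{k+1}=\bigl(1+\sqrt{1+4t_k^{2}}\bigr)/2$. This is the \emph{critical} regime, which the inertial forward--backward framework of \cite{AttouchCabot2018} does not cover (that line of work yields iterate convergence only under strictly damped rules, while giving rate statements for energies in the critical case); resolving precisely this case is the recent contribution of \cite[Theorem~1 and Section~2.4]{BFN2025} (and of \cite{JangRyu2025} for $\theta=0$ in finite dimensions), which is what the paper cites. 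Relying on \cite{AttouchCabot2018} for case \cref{fact:fistaBR:i} leaves $\theta=0$---arguably the main case---unproved.

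Two smaller problems. For case \cref{fact:fistaBR:iii} you argue that the class $t_{k+1}^{2}-t_k^{2}\le t_{k+1}$ \emph{subsumes} the FISTA choice and then invoke the convergence theorem for that one choice; this is backwards, since you need a result covering \emph{every} sequence in the class, not a single member of it. The paper cites \cite[Remark~3(iii)]{BFN2025}, which addresses the inequality version directly. For case \cref{fact:fistaBR:ii}, your proposed verification that $t_k\ge\tfrac{k+2}{2}$ is both unnecessary and false when $\alpha>3$: for $\alpha\ge 3$ one has $1+\tfrac{k}{\alpha-1}\le 1+\tfrac{k}{2}=\tfrac{k+2}{2}$, so the Chambolle--Dossal sequences sit \emph{below} the classical lower bound (that is exactly what makes them damped), and forcing them through \cref{30.2} cannot work. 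The paper's route is to cite \cite{ChD15} for $\alpha>3$ and to observe that the boundary case $\alpha=3$ gives $t_{k+1}=1+\tfrac{k}{2}$, which satisfies the recurrence of case \cref{fact:fistaBR:i} with $\theta=\tfrac{1}{2}$ and is therefore covered by \cite{BFN2025}; your vaguer appeal to ``the more recent composite analyses'' for $\alpha=3$ points in the right direction but misses this clean reduction.
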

\begin{proof}
\cref{fact:fistaBR:i}:
This is \cite[Theorem~1~and~Section~2.4]{BFN2025}.
See also \cite[Theorem~14]{JangRyu2025} for the case $\theta=0$
and $X$ is finite-dimensional.
\cref{fact:fistaBR:ii}:
The critical case $\alpha=3$ follows from \cref{fact:fistaBR:i}
applied 
with $\theta=\tfrac{1}{2}$.
See \cite{ChD15} for the case $\alpha>3$.
\cref{fact:fistaBR:iii}:
This is \cite[Remark~3(iii)]{BFN2025}.
\end{proof}

We are now ready for the main result in this section.

\begin{theorem}[weak convergence of APG iterates in the affine-quadratic case.]
\label{thm:w:conv}

    Recall \cref{eq:affineprob}
    and \cref{eq:fgF}. 
    Suppose that $S\neq \fady$.
    Let $(t_k)_{k \in \mathbb{N}}$ in $\mathbb{R}_{++}$ be a sequence 
of parameters and set $(\alpha_k)_\knn:=\big(\tfrac{t_{k}-1}{t_{k+1}}\big)$.
Let $T$ be defined as in \cref{eq:Tproxgrad} and 
let $(x_k)_\knn$
 be defined as in \cref{prop:main}.
 Suppose that $(x_k)_\knn$ is bounded and its weak cluster points lie in $S$
 (see \cref{fact:fistaBR} and \cref{thm:bddfista}\cref{thm:bddfista:ii} for sufficient conditions).
    Then 
    \begin{equation}
\text{$(x_k)_\knn$ converges weakly to $P_{S}x_0$.}
    \end{equation}  
\end{theorem}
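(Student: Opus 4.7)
My strategy is to show that the hypotheses of Proposition~\ref{prop:conv} are all met in the affine-quadratic setting, so that the desired weak convergence falls out immediately. The key observation is that in this setting the proximal-gradient operator $T$ of \cref{eq:Tproxgrad} is affine with the explicit decomposition $T = L + q$ recorded in \cref{eq:L:q:def}, which places us squarely in the framework of Section~\ref{sec:affine}.

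First, I would verify the standing assumption \cref{eq:L:assm}: linearity of $L$ is immediate from \cref{eq:L:q:def} since $P_{\parl U}$ is linear by \cref{fact:Caffine}\cref{fact:Caffine:ii}, and nonexpansiveness of $L$ follows from Lemma~\ref{lem:FBfacts}\cref{lem:FBfacts:iii} (or from Remark~\ref{rem:nonnon} combined with the averaged nonexpansiveness of $T$). Hence the APG recursion in the statement is a bona fide instance of the iteration studied in Proposition~\ref{prop:main}, with $r_0 = 0$ and $\alpha_k := (t_k - 1)/t_{k+1}$.

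Next, I would collect the remaining three hypotheses of Proposition~\ref{prop:conv}. Nonemptiness of $\fix T$ follows from Lemma~\ref{lem:FBfacts}\cref{lem:FBfacts:i}, which identifies $\fix T = S$, together with the standing assumption $S \neq \fady$. The boundedness of $(x_k)_{\knn}$ and the inclusion of its weak cluster points in $\fix T = S$ are exactly the hypotheses of the theorem (justified, for instance, by Fact~\ref{fact:fistaBR} or by Theorem~\ref{thm:bddfista}\cref{thm:bddfista:ii}). Finally, the strong convergence $T^k x_0 \to P_{\fix T} x_0$ is precisely the conclusion of Lemma~\ref{lem:proxgradlim} applied with $p_0 := x_0$.

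With all three inputs verified, Proposition~\ref{prop:conv} immediately yields $x_k \weakly P_{\fix T} x_0 = P_S x_0$, as required. There is essentially no obstacle to overcome here: the nontrivial work has already been absorbed into the decomposition $x_k = T^k x_0 + s_k$ of Proposition~\ref{prop:main}\cref{prop:main:i}, the identification $\parl(\fix T) = \fix L = (\cran(\Id - L))^\perp$ of Lemma~\ref{lem:foundation}\cref{lem:foundation:i}\&\cref{lem:foundation:iv}, and the weak-to-zero convergence of the residual $(s_k)_{\knn}$ provided by Theorem~\ref{thm:mainconv}\cref{thm:mainconv:iia}. The present theorem is thus the appropriate assembly of these pieces for the affine-quadratic problem \cref{eq:affineprob}.
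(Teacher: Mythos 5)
Your proposal is correct and follows essentially the same route as the paper: verify $\fix T = S \neq \fady$ via Lemma~\ref{lem:FBfacts}\cref{lem:FBfacts:i}, obtain $T^k x_0 \to P_S x_0$ from Lemma~\ref{lem:proxgradlim}, and conclude by Proposition~\ref{prop:conv}. The extra care you take in checking that $L$ is linear and nonexpansive is a welcome, if implicit, detail that the paper leaves to the reader.
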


\begin{proof}
Observe that $\fix T\neq \fady$ by \cref{lem:FBfacts}\cref{lem:FBfacts:i}. 
It follows from 
\cref{lem:proxgradlim}, applied with $p_0$ replaced by 
$x_0$,
that
\begin{equation}
    T^kx_0\to P_Sx_0.
\end{equation}
Now combine this with \cref{prop:conv} to learn that 
$(x_k)_\knn$ converges weakly to $P_{S}x_0$.
\end{proof}

\section{Strong convergence in the affine-quadratic case}
\label{sec:strong}

In this section, we prove the strong convergence of the 
APG iterates under the  assumptions that 
$\ran A$ is closed and that $\ker A+\parl U$ is closed; these assumptions
are automatic in finite dimensions. We 
emphasize that our argument accommodates a more general parameter sequence:
Indeed, we impose no additional structure on $(\alpha_k)_\knn$ associated 
with $(t_k)_\knn$ beyond the requirement that the resulting scheme guarantees $F(x_k)\to \mu$.
When $X$ is finite-dimensional, the conclusions of \cref{thm:w:conv} and \cref{thm:st:conv} below coincide; 
nevertheless, in the finite-dimensional case, \cref{thm:st:conv} is more general
given the generality of the parameter sequence.

Recall \cref{eq:affineprob}, \cref{eq:fgF} and \cref{eq:Tproxgrad}.
We start with the following key result,
which gives a formula for
$\parl S$ when $S\neq \fady$.

\begin{lemma}
\label{lem:parS}
Suppose $S\neq \fady$. Then 
\begin{equation}
\label{lem:FBfacts:iv}
\parl S=S-S=\fix L=\cran(\Id-L)^\perp =\ker A\cap (\parl U).
\end{equation}
\end{lemma}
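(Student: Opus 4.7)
The plan is to assemble the five claimed equalities as a chain, with most links supplied by the structural lemmas already established in \cref{sec:affine} and only one step requiring actual computation.

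First I would identify $S$ with $\fix T$ via \cref{lem:FBfacts}\cref{lem:FBfacts:i}. Because $T=L+q$ with $L$ linear (see \cref{eq:L:q:def}), $\fix T$ is the solution set of the linear equation $(\Id-L)x=q$, hence an affine subspace of $X$ whenever it is nonempty. Therefore $S$ is a closed affine subspace and the equality $\parl S=S-S$ holds by the very definition of the parallel subspace. The middle equalities $\parl S=\fix L=(\cran(\Id-L))^{\perp}$ then follow immediately: the first is \cref{lem:foundation}\cref{lem:foundation:iv} (combined with $S=\fix T$), and the second is \cref{lem:foundation}\cref{lem:foundation:i}. At this point only the final equality $\fix L=\ker A\cap \parl U$ requires direct work.

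For the inclusion $\ker A\cap \parl U\subseteq \fix L$, I would expand the definition $L=P_{\parl U}\bigl(\Id-\tfrac{1}{\lip}A^{*}A\bigr)$ and observe that any $x\in \ker A\cap \parl U$ satisfies $A^{*}Ax=0$ and $P_{\parl U}x=x$, so $Lx=x$ trivially. For the reverse inclusion, suppose $Lx=x$. Since $\ran L\subseteq \ran P_{\parl U}=\parl U$, we automatically have $x\in \parl U$, hence $P_{\parl U}x=x$. Substituting this into $Lx=x$ and using linearity of $P_{\parl U}$ collapses the identity to $P_{\parl U}(A^{*}Ax)=0$, which means $A^{*}Ax\in (\parl U)^{\perp}$. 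Pairing with $x\in \parl U$ then yields
\begin{equation}
\|Ax\|^{2}=\scal{A^{*}Ax}{x}=0,
\end{equation}
so $Ax=0$ and therefore $x\in \ker A\cap \parl U$.

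The only conceptual obstacle is this last step: recognizing that a fixed point of $L$ automatically lies in $\parl U$, which is exactly what makes the orthogonality $A^{*}Ax\in (\parl U)^{\perp}$ usable (via pairing with $x$) to extract $Ax=0$. Everything else is a direct application of the general affine framework of \cref{sec:affine} and of \cref{lem:FBfacts}.
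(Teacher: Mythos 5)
Your proof is correct, and for the only nontrivial link in the chain it takes a genuinely different route from the paper. Both arguments agree on the first three equalities: identify $S=\fix T$ via \cref{lem:FBfacts}\cref{lem:FBfacts:i}, then invoke \cref{lem:foundation}\cref{lem:foundation:iv} and \cref{lem:foundation}\cref{lem:foundation:i} to get $\parl S=\fix L=(\cran(\Id-L))^{\perp}$. For the final equality the paper works on the \emph{range} side: it computes $\cran(\Id-L)=\overline{(\parl U)^{\perp}+\ran A^{*}}$ by applying the displacement-range formula for compositions of averaged operators from \cite[Proposition~3.4]{BM:found} together with $\cran(A^{*}A)=\cran A^{*}$, and then passes to orthogonal complements. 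You instead compute $\fix L$ directly and element-wise: the inclusion $\ker A\cap\parl U\subseteq\fix L$ is immediate from \cref{eq:L:q:def}, and for the converse you correctly observe that $\ran L\subseteq\parl U$ forces any fixed point into $\parl U$, whence $P_{\parl U}(A^{*}Ax)=0$ gives $A^{*}Ax\in(\parl U)^{\perp}=\ker P_{\parl U}$, and pairing against $x\in\parl U$ yields $\|Ax\|^{2}=\scal{A^{*}Ax}{x}=0$. Your argument is more elementary and self-contained (it needs no external fact about ranges of compositions of averaged maps and no closure manipulations of sums of subspaces), at the cost of not producing the explicit formula $\cran(\Id-L)=\overline{(\parl U)^{\perp}+\ran A^{*}}$, which the paper's route yields as a by-product. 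Both are complete proofs of the stated identity.
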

\begin{proof}
  Combining \cref{lem:foundation}\cref{lem:foundation:iv}
   and \cref{lem:FBfacts:i} yields 
   $\parl S=S-S=\fix T-\fix T=\fix L=(\cran(\Id-L))^\perp$. We now turn to the last identity in 
\cref{lem:FBfacts:iv}.
Recalling \cref{eq:L:q:def}, 
in view of \cite[Proposition~3.4]{BM:found}
 and \cite[Fact~2.25(vi)]{BC2017}
we learn that
\begin{subequations}
    \begin{align}
        \cran (\Id-L)
        &=\overline{\ran(\Id-P_{\parl U})+\ran (\Id- (\Id-\tfrac{1}{\lip}(A^*A)))}
        \\
        &=\overline{(\parl U)^\perp+\cran (A^*A)}
        \\
        &=\overline{(\parl U)^\perp+\cran A^*}=\overline{(\parl U)^\perp+\ran A^*}.
   \end{align}
\end{subequations}
Therefore,  $(\cran (\Id-L))^\perp=(\overline{(\parl U)^\perp+\ran A^*})^\perp
=((\parl U)^\perp)^\perp\cap(\cran A^*)^\perp=\parl U\cap \ker A$
by, e.g., \cite[Fact~2.25(v)]{BC2017}.
This completes the proof.
\end{proof}

Below, we provide two instances where $S\neq \fady$.
\begin{lemma}
\label{lem:Snotfady}
   Let $T$ be defined as in \cref{eq:Tproxgrad}.
   Suppose that one of the following holds.
   \begin{enumerate}
\item 
\label{lem:Snotfady:i}
$X$ is finite-dimensional.
\item 
\label{lem:Snotfady:ii}
$V$ is a closed affine subspace of $X$
and that $f=\tfrac{1}{2}\distsq{\cdot}{V}$.
   \end{enumerate}
   Then $S\neq \fady$ and $\parl S=\ker A\cap (\parl U)$.
\end{lemma}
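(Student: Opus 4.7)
Both assertions reduce to the statement $S \neq \fady$: once this is in hand, the identity $\parl S = \ker A \cap \parl U$ follows immediately from \cref{lem:parS}. Hence I would split into the two cases and treat each by exhibiting a minimizer.

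For case (i), in finite dimensions the image $A(U)$ is a finite-dimensional affine subspace of $Y$ and is therefore automatically closed. Consequently
\begin{equation*}
\inf_{x \in U}\tfrac{1}{2}\|Ax-b\|^2 \;=\; \inf_{y \in A(U)}\tfrac{1}{2}\|y-b\|^2,
\end{equation*}
and the right-hand side is attained at $y^{\star} := P_{A(U)}b$. Any preimage $x^{\star} \in U$ with $Ax^{\star} = y^{\star}$ then lies in $S$, so $S \neq \fady$.

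For case (ii), the identification $f = \tfrac{1}{2}\distsq{\cdot}{V}$ yields $\nabla f = \Id - P_V$ with Lipschitz constant $\lip = 1$, so the proximal-gradient operator \cref{eq:Tproxgrad} reduces to the MAP operator $T = P_U P_V$ (compare \cref{ex:MAP}). Writing $P_V = v_0' + P_{\parl V}$ via \cref{fact:Caffine} with $v_0' := P_V 0 \in (\parl V)^{\perp}$, I would identify $A := P_{(\parl V)^{\perp}}$ and $b := v_0'$ in the framework of \cref{eq:affineprob}; note $A^* = A = A^2$, so $\ran A = (\parl V)^{\perp}$ is closed and $\ker A = \parl V$. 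Mirroring the argument of case (i), nonemptiness of $S$ then reduces to closedness of $A(U) = P_{(\parl V)^{\perp}}(U)$, which by \cref{fact:fun:linalg} applied to $A_{|U}$ together with \cref{rem:fun:linalg} is equivalent to closedness of $\parl U + \parl V$.

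The main obstacle will be case (ii) in genuinely infinite dimensions: closedness of $\parl U + \parl V$ does not follow from $V$ being a closed affine subspace alone, and without it the infimum of $\tfrac{1}{2}\distsq{\cdot}{V}$ over $U$ can fail to be attained. I would therefore expect the authors' proof to invoke \cref{fact:fun:linalg} precisely at this step, either under a standing mild closedness hypothesis (for example $c_F(\parl U,\parl V) < 1$, which is automatic in finite dimensions) or through a variational argument tailored to the distance-squared structure that upgrades a minimizing sequence to a genuine minimizer.
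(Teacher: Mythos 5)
Your reduction of both conclusions to the single claim $S\neq\fady$, followed by an appeal to \cref{lem:parS} for the identity $\parl S=\ker A\cap(\parl U)$, matches the paper. For case \cref{lem:Snotfady:i} your argument is correct but takes a genuinely different route: the paper does not project $b$ onto the closed set $A(U)$; it writes $S=\zer(\nabla f+N_U)$ by Fermat's rule and invokes \cite[Theorem~3.13]{BMW13} to identify $\ran\big(A^*(A(\cdot)-b)+N_U\big)$ with $\ran\big(A^*(A(\cdot)-b)\big)+(\parl U)^\perp$, which visibly contains $0$. Your version is more elementary and exhibits the minimizer explicitly as a preimage of $P_{A(U)}b$; the paper's version is phrased so as to sit inside the monotone-operator calculus it uses elsewhere. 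Either is fine for (i).

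Case \cref{lem:Snotfady:ii} is where your proposal stops short, and relative to the stated lemma this is a genuine gap: you set up the correct identification $A=P_{(\parl V)^\perp}$, $b=P_V0$, but you never produce a minimizer, and you concede that closedness of $\parl U+\parl V$ is not available. The paper's own proof of (ii) is a one-line citation: by \cref{lem:FBfacts}\cref{lem:FBfacts:i} and \cref{ex:MAP}, $S=\fix T=\fix P_UP_V$, and nonemptiness of $\fix P_UP_V$ for two closed affine subspaces is taken from \cite[Theorem~5.6.1]{BBL97} --- no variational argument and no Friedrichs-angle hypothesis appears. That said, the obstacle you flag is substantive and worth raising: since $\fix P_UP_V=\argmin_{x\in U}\tfrac12\distsq{x}{V}$ within the paper's own framework, one can take closed linear subspaces $M=\parl U$ and $N=\parl V$ with $M+N$ dense but not closed, set $U=M$ and $V=w+N$ with $w\in\overline{M+N}\setminus(M+N)$, and obtain $\inf_{x\in U}\distsq{x}{V}=0$ while $U\cap V=\fady$, so that the infimum is not attained and $S=\fady$. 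This suggests the hypotheses of \cite[Theorem~5.6.1]{BBL97} should be checked carefully; consistently, the only place the paper actually uses case (ii), namely \cref{thm:st:conv}\cref{thm:st:conv:ii}, additionally assumes that $U+V$ is closed, which is exactly the hypothesis your argument needs (via \cref{fact:fun:linalg} and \cref{rem:fun:linalg}) to go through. So to match the paper you should simply cite \cite[Theorem~5.6.1]{BBL97} after identifying $S$ with $\fix P_UP_V$; to close your own argument you need the closedness of $\parl U+\parl V$, precisely as you anticipated.
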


\begin{proof}
In view of \cref{lem:parS}, it suffices to show that
$S\neq \fady$.
    \cref{lem:Snotfady:i}:
    Recalling \cref{eq:affineprob}, 
Fermat's theorem, see, e.g.,  \cite[Theorem~16.3]{BC2017} implies that 
$S=\zer(\partial F)=\zer (\partial (f+\iota_U))=\zer (\grad f+N_U)$.
Hence, $S\neq \fady$ $\siff$ $0\in \ran (\grad f+N_U)=\ran(A^*(A(\cdot)-b)+N_U)$.
Because the sets $\ran(A^*(A(\cdot)-b)+N_U)$, $\ran N_U=(\parl U)^\perp$ 
and $\ran A^*(A(\cdot)-b)$
are affine we learn that\footnote{Let $S\subseteq X$. We use $\ri S$ to denote the \emph{relative interior} of $S$.} 
$\ri \ran(A^*(A(\cdot)-b)+N_U)=\ran(A^*(A(\cdot)-b)+N_U)$, 
$\ri \ran N_U= \ran N_U$ and $\ri \ran A^*(A(\cdot)-b)=\ran A^*(A(\cdot)-b)$.
Now combine this with \cite[Theorem~3.13]{BMW13} to learn that 
\begin{equation}
  \ran(A^*(A(\cdot)-b)+N_U)  
  =\ran (A^*(A(\cdot)-b))+(\parl U)^\perp.
\end{equation}
Finally, observe $0\in \ran (A^*(A(\cdot)-b))$ and $0\in (\parl U)^\perp$.
Consequently, $0\in  \ran(A^*(A(\cdot)-b)+N_U)  $ and $S\neq \fady$.
    \cref{lem:Snotfady:ii}:
It follows from  \cref{lem:FBfacts}\cref{lem:FBfacts:i} and 
\cref{ex:MAP}
that $S=\fix T=\fix P_UP_V$
which is nonempty by 
\cite[Theorem~5.6.1]{BBL97}.
\end{proof}

Let $(\alpha_k)_\knn$ be a sequence of real numbers.
Let $x_0\in X$ and set $r_0:=0\in X$.
Update via
\begin{subequations}
\label{eq:genscheme}
    \begin{align}
        x_{k+1}&=T(x_k+r_k)\in U
        \\
        r_{k+1}&=\alpha_k(x_{k+1}-x_k).
    \end{align}
\end{subequations}
\begin{proposition}
\label{lem:distkerA}
Suppose $S\neq \fady$.
Let $\overline{x}\in S$.
Then
the following hold.
\begin{enumerate}
    \item 
    \label{lem:distkerA:i}
    Let $k\ge 1$. Then 
    $\norm{A(x_k-\overline{x})}^2= 2(f(x_k)-f(\overline{x}))$.
    \item 
    \label{lem:distkerA:ii}
    Suppose that $\ran A $ is closed, that 
    $\ker A+\parl U$ 
    is closed\footnote{This assumption is true if, for instance, $X$ is finite-dimensional.
    See \cref{fact:fun:linalg} and \cref{rem:fun:linalg} for equivalent characterizations.} 
    and that $f(x_k)-f(\overline{x})\to 0$; equivalently $F(x_k)-\mu\to 0$. Then $\dist{x_k}{S}\to 0$.
\end{enumerate}
\end{proposition}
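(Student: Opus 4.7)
The plan is to prove \cref{lem:distkerA:i} by a direct expansion of the quadratic $f$ around any $\overline{x}\in S$, using the first-order optimality of $\overline{x}$ to kill the cross term, and then to deduce \cref{lem:distkerA:ii} from this identity by instantiating \cref{cor:fun:linalg} at the linear subspace $\parl U$ and the operator $A$.

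For \cref{lem:distkerA:i}, I would first observe that every iterate lies in $U$: the recursion \cref{eq:genscheme} reads $x_{k+1}=T(x_k+r_k)$, and $T$ in \cref{eq:Tproxgrad} ends with $P_U$, so $x_k\in U$ for all $k\ge 1$; hence $x_k-\overline{x}\in \parl U$. Expanding then gives
\begin{equation*}
2\bigl(f(x_k)-f(\overline{x})\bigr)
=\norm{Ax_k-b}^2-\norm{A\overline{x}-b}^2
=\norm{A(x_k-\overline{x})}^2+2\scal{x_k-\overline{x}}{A^*(A\overline{x}-b)}.
\end{equation*}
By Fermat's rule applied to $S=\argmin(f+\iota_U)$, we have $0\in \nabla f(\overline{x})+N_U(\overline{x})$, so $-A^*(A\overline{x}-b)\in N_U(\overline{x})=(\parl U)^\perp$. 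Pairing this with $x_k-\overline{x}\in \parl U$ kills the cross term and yields the claimed identity.

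For \cref{lem:distkerA:ii}, I would use \cref{lem:parS} to write $\parl S=\ker A\cap \parl U$, so that $S=\overline{x}+(\ker A\cap \parl U)$ and therefore $\dist{x_k}{S}=\dist{x_k-\overline{x}}{\ker A\cap \parl U}$. Since $\ran A$ is closed and $\ker A+\parl U$ is closed by hypothesis, \cref{cor:fun:linalg} (with its $U$ replaced by the closed linear subspace $\parl U$) supplies $C\ge 0$ such that
\begin{equation*}
\dist{x-\overline{x}}{\ker A\cap \parl U}\le C\,\norm{A(x-\overline{x})}\quad\text{for every }x\in U.
\end{equation*}
Combining this with \cref{lem:distkerA:i} produces $\dist{x_k}{S}\le C\sqrt{2(f(x_k)-f(\overline{x}))}$. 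Finally, because $x_k,\overline{x}\in U$ we have $g(x_k)=g(\overline{x})=0$, so $F(x_k)-\mu=f(x_k)-f(\overline{x})$, which proves the stated equivalence of hypotheses and gives $\dist{x_k}{S}\to 0$.

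The only real obstacle is \cref{lem:distkerA:ii}: one needs a metric-regularity-type bound that converts closeness of the objective values into closeness of the iterates to the solution set, and this is exactly what \cref{cor:fun:linalg} delivers, provided the two closedness conditions (automatic in finite dimensions) are in force. Part \cref{lem:distkerA:i} is a short computation, but its crux is the recognition that APG iterates produced by the operator in \cref{eq:Tproxgrad} stay in $U$, which is precisely what makes the cross term orthogonal to $N_U(\overline{x})$.
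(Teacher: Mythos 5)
Your proposal is correct and follows essentially the same route as the paper: part \cref{lem:distkerA:i} via the quadratic expansion with the cross term annihilated by the optimality condition $\nabla f(\overline{x})\in(\parl U)^\perp$ paired against $x_k-\overline{x}\in\parl U$, and part \cref{lem:distkerA:ii} via \cref{lem:parS} and \cref{cor:fun:linalg} applied with $U$ replaced by $\parl U$. No gaps.
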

\begin{proof}
\cref{lem:distkerA:i}:
On the one hand, we note that $x_k-\overline{x}\in U-U=\parl U$.
On the other hand, by the optimality of $\overline{x}$
$(\exists u^\sperp\in (\parl U)^\perp)$ such that 
$u^\sperp=\grad f(\overline{x})=A^*A\overline{x}-A^*b$.
Hence $\scal{u^\sperp}{x_k - \overline{x}}=0$.
Now
\begin{subequations}
\begin{align}
2(f(x_k)-f(\overline{x})) &= \Vert Ax_k - b \Vert^2 - \Vert A\overline{x} - b \Vert^2 \\
&= \langle x_k, A^*A x_k \rangle - 2\langle x_k, A^* b \rangle - \langle \overline{x}, A^*A \overline{x} \rangle + 2\langle \overline{x}, A^* b \rangle \\
&= \langle x_k + \overline{x}, A^*A(x_k - \overline{x}) \rangle - 2\langle b, A(x_k - \overline{x}) \rangle \\
&= \langle A^*A(x_k + \overline{x}) - 2A^*b, x_k - \overline{x} \rangle \\
&= \langle 2A^*A \overline{x} - 2A^*b + A^*A(x_k - \overline{x}), x_k - \overline{x} \rangle \\
&= 2\langle u^\sperp, x_k - \overline{x} \rangle + \langle A^*A(x_k - \overline{x}), x_k - \overline{x} \rangle
\\
&=\langle A(x_k - \overline{x}), A(x_k - \overline{x}) \rangle
=\norm{A(x_k-\overline{x})}^2.
\label{eq:fdif}
\end{align}
\end{subequations}
\cref{lem:distkerA:ii}:
First note that, $(\forall k\ge 1)$ $ f(x_k)-f(\overline{x})= f(x_k)+\iota_U(x_k)-\mu=F(x_k)-\mu$.
This verifies the ``equivalently" part.
Now, observe that $\ker A_{|\parl U}=\ker A\cap \parl U$.
In view of 
\cref{lem:FBfacts:iv},
write $S=\overline{x}+(\ker A\cap \parl U)$.
It follows from 
\cref{cor:fun:linalg} applied with $U$ replaced by $\parl U$
and \cref{lem:distkerA:i}
that there exists $C\ge 0$
such that 
\begin{subequations}
\begin{align}
\distsq{x_k}{S}&=\distsq{x_k}{\overline{x}+(\ker A\cap \parl U)}
   \\
   &=\distsq{x_k-\overline{x}}{\ker A\cap \parl U}
   \\
   &=\distsq{x_k-\overline{x}}{\ker A_{|\parl U}}\\
   &\le C \norm{A_{|\parl U}(x_k-\overline{x})}^2
   =C \norm{A(x_k-\overline{x})}^2
   \\
   &= 2C(f(x_k)-f(\overline{x}))\to 0.
\end{align}   
\end{subequations}
The proof is complete.
\end{proof}

We are now ready to prove our main result.
\begin{theorem}[strong convergence of APG iterates in the affine-quadratic case]
\label{thm:st:conv}
    Recall \cref{eq:affineprob}, \cref{eq:fgF} and \cref{eq:Tproxgrad}. 
    Let $(x_k)_\knn$ be defined as in \cref{eq:genscheme}.
 Suppose that $F(x_k)-\mu\to 0$    
and  that 
    one of the following hold.
    \begin{enumerate}
    \item 
    \label{thm:st:conv:i}
    $S\neq \fady$, $\ran A$ is closed and $\parl U+\ker A$ is closed. 
    \item 
        \label{thm:st:conv:ii}
    $V$ is a closed affine subspace of $X$,
    $f=\tfrac{1}{2}\distsq{\cdot}{V}$ and $U+V$ is closed.
    \item 
        \label{thm:st:conv:iii}
    $X$ is finite-dimensional.
    \end{enumerate}
    Then 
    \begin{equation}
    \label{eq:strongconv}
x_k\to P_Sx_0.
    \end{equation} 
    In particular, if $(t_k)_\knn$ is a parameter sequence of APG, see, e.g., \cref{30.2}, and we set  
    $(\alpha_k)=\Big(\tfrac{t_{k}-1}{t_{k+1}}\Big)_\knn$
    then the APG sequences defined in \cref{thm:30.4}
    satisfy \cref{eq:strongconv}.
\end{theorem}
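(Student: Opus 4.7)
The plan is to reduce cases \ref{thm:st:conv:ii} and \ref{thm:st:conv:iii} to case \ref{thm:st:conv:i}, and then to apply \cref{thm:mainconv}\cref{thm:mainconv:-1} with the decomposition $x_k = T^kx_0 + s_k$ provided by \cref{prop:main}\cref{prop:main:i}, using $W = S$ as the affine subspace and $p_k = T^kx_0$ as the ``good'' sequence.

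First I would handle the reductions. In case \cref{thm:st:conv:iii} every linear subspace is closed and $S\neq\fady$ by \cref{lem:Snotfady}\cref{lem:Snotfady:i}, so all hypotheses of \cref{thm:st:conv:i} are automatic. In case \cref{thm:st:conv:ii}, fixing any $v_0 \in V$ we write $f(\cdot) = \tfrac{1}{2}\norm{P_{(\parl V)^\perp}(\cdot - v_0)}^2$, which fits the framework \cref{eq:affineprob} with $A := P_{(\parl V)^\perp}$ and $b := P_{(\parl V)^\perp}v_0$. Then $\ran A = (\parl V)^\perp$ is closed, $\ker A = \parl V$, so $\ker A + \parl U = \parl V + \parl U$, which is closed iff $U + V$ is closed by \cref{rem:fun:linalg}. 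Nonemptiness of $S$ follows from \cref{lem:Snotfady}\cref{lem:Snotfady:ii}. So we reduce to case \cref{thm:st:conv:i}.

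Next I would execute the main argument. Since $\fix T = S \neq \fady$ by \cref{lem:FBfacts}\cref{lem:FBfacts:i}, \cref{lem:proxgradlim} applied with $p_0 := x_0$ yields $T^k x_0 \to P_S x_0$ strongly. By \cref{prop:main}\cref{prop:main:i} we have the decomposition
\begin{equation}
x_k = T^k x_0 + s_k, \qquad s_k := \sum_{n=1}^{k} L^n r_{k-n}.
\end{equation}
The crucial orthogonality is: $s_k \in (\parl S)^\perp$. Indeed, by \cref{prop:main}\cref{prop:main:iii}, $s_k \in \ran(\Id - L) \subseteq \cran(\Id - L)$, and by \cref{lem:parS} (combined with \cref{lem:foundation}\cref{lem:foundation:i}) we have $\parl S = \fix L = (\cran(\Id - L))^\perp$, so $(\parl S)^\perp = \cran(\Id - L) \supseteq \{s_k\}$. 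Meanwhile, the assumption $F(x_k) - \mu \to 0$ combined with \cref{lem:distkerA}\cref{lem:distkerA:ii} (whose hypotheses hold in case \cref{thm:st:conv:i}) yields $\dist(x_k, S) \to 0$, and $\dist(T^k x_0, S) \to 0$ is immediate from the strong convergence $T^k x_0 \to P_S x_0 \in S$.

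With these ingredients in place, I would invoke \cref{thm:mainconv}\cref{thm:mainconv:-1} taking $W := S$ (a closed affine subspace, as $S = \overline{x} + \ker A \cap \parl U$ for any $\overline{x} \in S$ by \cref{lem:parS}), $p_k := T^k x_0$, and $s_k$ as above: this yields $s_k \to 0$ and then, since $p_k \to P_S x_0$, that $x_k \to P_S x_0$ strongly. The final sentence is immediate: under the APG parameter assumptions \cref{30.2}, \cref{thm:30.4} gives $F(x_k) \to \mu$, so \cref{eq:strongconv} applies. The main conceptual obstacle is identifying the correct orthogonality $s_k \in (\parl S)^\perp$; once one recognizes that the residual terms $L^n r_{k-n}$ live in $\ran(\Id - L)$ and that this space is precisely the orthogonal complement of $\parl S$ via \cref{lem:parS}, the rest is a clean application of the general framework of \cref{sec:affine}.
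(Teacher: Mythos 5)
Your proposal is correct and follows essentially the same route as the paper: reduce cases \cref{thm:st:conv:ii} and \cref{thm:st:conv:iii} to \cref{thm:st:conv:i}, obtain $T^kx_0\to P_Sx_0$ from \cref{lem:proxgradlim}, get $\dist{x_k}{S}\to 0$ from \cref{lem:distkerA}\cref{lem:distkerA:ii}, and conclude via \cref{thm:mainconv}\cref{thm:mainconv:-1} with $W=S$ and $p_k=T^kx_0$. You are in fact slightly more explicit than the paper in verifying the hypothesis $s_k\in(\parl S)^\perp$ via \cref{prop:main}\cref{prop:main:iii} and \cref{lem:parS}, and in linearizing case \cref{thm:st:conv:ii} with $A=P_{(\parl V)^\perp}$, but these are presentational refinements of the same argument.
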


\begin{proof}
\cref{thm:st:conv:i}:
Indeed, set $p_0=x_0$
and consider the sequence 
$(p_k)_\knn$
generated by the update $(\forall \knn)$
$P_{k+1}=Tp_k$.
On the one hand, it follows from
\cref{lem:proxgradlim} 
that $p_k\to P_{S}x_0\in S$.
Hence, $\dist{p_k}{S}\to 0$.
On the other hand, 
it follows from \cref{lem:distkerA}\cref{lem:distkerA:ii}
that $\dist{x_k}{S}\to 0$.
Now combine this with \cref{thm:mainconv}\cref{thm:mainconv:-1}
applied with $(p^*, W)$ replaced by $(P_Sx_0,S)$
in view of 
\cref{lem:FBfacts}\cref{lem:FBfacts:i}.
\cref{thm:st:conv:ii}:
Observe that $S\neq \fady$ by \cref{lem:Snotfady}\cref{lem:Snotfady:ii}.
Moreover, setting $A=\grad f=\Id-P_V$ yields 
$\ran A=V^\perp$ which is closed and 
$\ker A=V$.
Clearly, $\parl U+V$ is closed $\siff $  $U+V$ is closed.
Now combine this with \cref{thm:st:conv:i}.
\cref{thm:st:conv:iii}:
It follows from \cref{lem:Snotfady}\cref{lem:Snotfady:i}
that $S\neq \fady$.
Moreover, $\parl U+\ker A$ is a finite-dimensional affine subspace, 
hence closed.
Now combine this with \cref{thm:st:conv:i}.
The ``in particular" part is obvious, in view of \cref{eq:300}, by observing that
\cref{eq:genscheme} with  $(\alpha_k)=\Big(\tfrac{t_{k}-1}{t_{k+1}}\Big)_\knn$
is the APG algorithm in \cref{thm:30.4}.
\end{proof}



\section{APG for a cone and an affine subspace}
\label{sec:example}
Consider the PGM and the APG
scheme in \cref{30.2},
applied to solve \cref{eq:affineprob}, 
which recovers FISTA, Chambolle-Dossal and AGD as special cases.
\cref{thm:st:conv} shows that in finite-dimensional Hilbert spaces, running both methods 
with the same starting point yields the same limit point. 
In \cref{prop:example-w2} below, we demonstrate that this result
does not generalize beyond the affine-quadratic setting in \cref{eq:affineprob}.

In this section, we set 
\begin{equation}
U=\{(x_1,x_2)\in\mathbb{R}^2: x_1+x_2=1\}\;\;\text{and}\;\; V=\mathbb{R}^2_{+},
\end{equation}
and we use the APG parameter sequence $(t_k)_\knn:=\big(\tfrac{k+2}{2}\big)_\knn$, which 
implies that 
\begin{equation}
(\alpha_k)_\knn:=\Big(\frac{t_k-1}{t_{k+1}}\Big)_\knn=\Big(\frac{k}{k+3}\Big)_\knn
\;\text{lies in }\;\left]0,1\right[.
\end{equation}

Let $(a,b)\in \RR^2$. Recall that
\begin{equation}
\label{eq:PVPU}
P_V(a,b)=\big(\max\{a,0\},\,\max\{b,0\}\big)\quad\text{and}
\quad
P_U(a,b)
=\tfrac{1}{2}(a-b+1,b-a+1).
\end{equation}


Let $w\ge 1$,  let $x_0:=(w,0)$ and let $y_0:=p_0:=x_0$. 
$(\forall \knn)$ update via
\begin{subequations}
    \begin{align}
        p_{k+1}&=P_UP_V(p_k)
        \\
         x_{k+1}&=P_UP_V(y_k)
         \\
        y_{k+1}&=x_{k+1}+\alpha_k(x_{k+1}-x_k).
    \end{align}
\end{subequations}

\begin{remark}
\label{rem:coneaff}
\begin{enumerate}
\item
\label{rem:coneaff:i}
    In view of \cref{ex:MAP} and \cref{thm:30.4},
the sequence $(p_k)_\knn$
(respectively, the sequences $(x_k)_\knn$ and $(y_k)_\knn$
    is the MAP sequence (respectively, the APG sequences)
    generated with the starting point $(w,0)$ to solve 
\begin{equation}
\label{eq:conceaff:prob}
\text{minimize}\quad  \tfrac{1}{2}\,\distsq{x}{V} \quad 
\text{subject to}\quad  x \in U .    
\end{equation}
\item 
\label{rem:coneaff:ii}
Recalling \cref{eq:conceaff:prob}, it is clear that 
$S=U\cap V=\menge{(u,1-u)\in \RR^2}{0\le u\le 1}\neq \fady$. 
On the one hand, using, e.g.,  
\cite[Example~28.11]{BC2017}, we learn that $(p_k)_\knn$
converges to a point in $U\cap V$.
On the other hand, the recent breakthrough in 
\cite[Section~2.4]{BFN2025} implies the convergence 
of $(x_k)_\knn$ (and hence, of $(y_k)_\knn$, because 
$(\alpha_k)_\knn$ lies in $\left[0,1\right [$\;)
to a point in $U\cap V$. In passing, we point out that we 
identify the limit of $(x_k)_\knn$ under an abstract condition as illustrated
in \cref{prop:FISTAcone}
below.
\end{enumerate}
\end{remark}
Recalling \cref{rem:coneaff}\cref{rem:coneaff:ii},
we set 
\begin{equation}
p^*:=\lim_{k\to\infty}p_k\quad \text{and} \quad x^*:=\lim_{k\to\infty}x_k=\lim_{k\to\infty}y_k. 
\end{equation}

The following result gives the limit of the sequence $(p_k)_\knn$.
\begin{proposition}
\label{prop:MAP-to-10}
We have $(p_k)_{k\ge 1}=\big(\big(1+\tfrac{w-1}{2^{k}},\ -\,\tfrac{w-1}{2^{k}}\big)\big)_{k\ge 1}\; $.
Hence,
\begin{equation}
\label{eq:MAP-limit}
p^*=(1,0).
\end{equation}
\end{proposition}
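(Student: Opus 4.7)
The plan is to prove this by straightforward induction on $k$, using the explicit formulas for the projections $P_V$ and $P_U$ recorded in \cref{eq:PVPU}. The key structural observation is that once $p_k$ has the claimed form with $w\ge 1$, its first coordinate $1+\tfrac{w-1}{2^k}$ is strictly positive and its second coordinate $-\tfrac{w-1}{2^k}$ is nonpositive, so that applying $P_V$ simply zeroes out the second coordinate. The remaining step — applying $P_U$ — is an affine computation that halves the quantity $w-1$ and reproduces the inductive pattern.

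First I would handle the base case $k=1$: since $p_0=(w,0)$ with $w\ge 1$, we have $P_V(p_0)=(w,0)$, and then \cref{eq:PVPU} gives $p_1=P_U(w,0)=\tfrac{1}{2}(w+1,1-w)=\bigl(1+\tfrac{w-1}{2},-\tfrac{w-1}{2}\bigr)$, matching the claimed formula at $k=1$. For the inductive step, I would assume the formula holds at some $k\ge 1$ and compute $p_{k+1}=P_UP_V(p_k)$: since $1+\tfrac{w-1}{2^k}\ge 1>0$ and $-\tfrac{w-1}{2^k}\le 0$, we get $P_V(p_k)=\bigl(1+\tfrac{w-1}{2^k},0\bigr)$, and then $P_U$ evaluated at this point equals
\begin{equation}
\tfrac{1}{2}\Bigl(1+\tfrac{w-1}{2^k}+1,\ -\bigl(1+\tfrac{w-1}{2^k}\bigr)+1\Bigr)
=\Bigl(1+\tfrac{w-1}{2^{k+1}},\ -\tfrac{w-1}{2^{k+1}}\Bigr),
\end{equation}
closing the induction.

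The limit claim \cref{eq:MAP-limit} then follows immediately by letting $k\to\infty$ in the explicit formula, since $\tfrac{w-1}{2^k}\to 0$. There is no genuine obstacle here: the argument is a bookkeeping exercise, and the only point requiring care is to confirm the sign conditions on the coordinates of $p_k$ so that the piecewise formula for $P_V$ in \cref{eq:PVPU} reduces to the linear action described above. Since $w\ge 1$ is assumed, this sign control is automatic throughout the induction.
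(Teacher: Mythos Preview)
Your proof is correct and follows essentially the same induction argument as the paper's own proof in Appendix~B: both verify the base case $k=1$ directly from \cref{eq:PVPU}, then apply $P_V$ followed by $P_U$ to the inductive hypothesis to halve the quantity $w-1$. Your explicit remark on the sign conditions justifying the action of $P_V$ is a nice touch that the paper leaves implicit.
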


\begin{proof}
See \cref{app:A}.
\end{proof}

The following lemma 
is crucial in the proof 
of \cref{prop:FISTAcone} below.
\begin{lemma}
\label{lem:aux}
Let $M\ge 2$ and let $a\in\,]0,1[$.
Define a sequence $(a_k)_{k\ge M}$ by
$a_M:=a$ and $(\forall k\ge M)$
\begin{equation}
a_{k+1}:=\frac{k-1}{k+2}\,a_k.
\end{equation}
Let $k\ge M$.
Then the following hold.
\begin{enumerate}
\item
\label{lem:aux:i}
$a_k>0\quad\text{and}\quad a_{k+1}<a_k.$

\item
\label{lem:aux:ii}
$a_k
= a\prod_{j=M}^{k-1}\frac{j-1}{j+2}
= a\,\frac{(M-1)M(M+1)}{(k-1)k(k+1)}$.

\item
\label{lem:aux:iii}
$\lim_{k\to\infty}a_k=0$.
\item
\label{lem:aux:iv}
$\sum_{j=M}^{k} a_{j+1}
= \frac{a\,(M-1)}{2}\,\Bigl(1-\frac{M(M+1)}{(k+1)(k+2)}\Bigr)$.
Consequently,
$\sum_{j=M}^{\infty} a_{j+1}=\frac{a\,(M-1)}{2}$.

\item
\label{lem:aux:v}
$\sum_{j=k}^{\infty} a_{j+1}
= \frac{a\,(M-1)M(M+1)}{2\,k(k+1)}$.
\item
\label{lem:aux:vi}
$a_{k+1}\ \le\ \sum_{j=k+1}^{\infty} a_{j+1}$.

\end{enumerate}
\end{lemma}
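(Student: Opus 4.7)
My proof plan is to establish parts \cref{lem:aux:i}--\cref{lem:aux:iii} by straightforward induction on the explicit product, then reduce parts \cref{lem:aux:iv}--\cref{lem:aux:vi} to a telescoping partial-fraction identity.

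First, for \cref{lem:aux:i}, I would induct on $k$: since $a_M=a>0$ and the multiplier $(k-1)/(k+2)$ is strictly positive for all $k\ge M\ge 2$, positivity is immediate; and since $(k-1)/(k+2)<1$ for every $k\ge M$, strict monotonicity $a_{k+1}<a_k$ follows at once. For \cref{lem:aux:ii}, iterating the recursion gives $a_k=a\prod_{j=M}^{k-1}\frac{j-1}{j+2}$; the numerator telescopes to $(M-1)M(M+1)\cdots(k-2)=\frac{(k-2)!}{(M-2)!}$ and the denominator to $(M+2)(M+3)\cdots(k+1)=\frac{(k+1)!}{(M+1)!}$, after which massive cancellation produces the stated closed form $a_k=a\cdot\frac{(M-1)M(M+1)}{(k-1)k(k+1)}$. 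For \cref{lem:aux:iii}, the right-hand side of this closed form clearly tends to $0$ as $k\to\infty$.

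Next, for \cref{lem:aux:iv} and \cref{lem:aux:v}, the closed form of \cref{lem:aux:ii} yields
\begin{equation}
a_{j+1}=a\cdot\frac{(M-1)M(M+1)}{j(j+1)(j+2)}.
\end{equation}
The key tool is the partial-fraction identity
\begin{equation}
\frac{1}{j(j+1)(j+2)}=\frac{1}{2}\left(\frac{1}{j(j+1)}-\frac{1}{(j+1)(j+2)}\right),
\end{equation}
which telescopes. Summing from $j=M$ to $j=k$ gives $\frac{1}{2}\bigl(\frac{1}{M(M+1)}-\frac{1}{(k+1)(k+2)}\bigr)$; multiplying by $a(M-1)M(M+1)$ and simplifying yields the formula in \cref{lem:aux:iv}. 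Letting $k\to\infty$ gives the infinite sum $\frac{a(M-1)}{2}$. The same telescoping applied to the tail from $j=k$ to $\infty$ yields $\sum_{j=k}^{\infty}\frac{1}{j(j+1)(j+2)}=\frac{1}{2k(k+1)}$, which produces \cref{lem:aux:v}.

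Finally, for \cref{lem:aux:vi}, I would compare the two closed forms directly: by \cref{lem:aux:ii},
\begin{equation}
a_{k+1}=a\cdot\frac{(M-1)M(M+1)}{k(k+1)(k+2)},
\end{equation}
while by \cref{lem:aux:v} (applied with $k$ replaced by $k+1$),
\begin{equation}
\sum_{j=k+1}^{\infty}a_{j+1}=\frac{a(M-1)M(M+1)}{2(k+1)(k+2)}.
\end{equation}
The inequality $a_{k+1}\le\sum_{j=k+1}^{\infty}a_{j+1}$ then reduces to $\frac{1}{k(k+1)(k+2)}\le\frac{1}{2(k+1)(k+2)}$, i.e., $k\ge 2$, which is guaranteed since $k\ge M\ge 2$. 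The only mild subtlety is to make sure the index bookkeeping in the sum starting at $j=k+1$ matches the shifted tail formula; no step is genuinely hard, so the main "obstacle" is just careful indexing between the parts.
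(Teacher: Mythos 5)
Your proposal is correct and follows essentially the same route as the paper: telescoping the product for \cref{lem:aux:ii}, the partial-fraction identity $\tfrac{1}{j(j+1)(j+2)}=\tfrac12\bigl(\tfrac{1}{j(j+1)}-\tfrac{1}{(j+1)(j+2)}\bigr)$ for \cref{lem:aux:iv}--\cref{lem:aux:v}, and a direct comparison of closed forms reducing \cref{lem:aux:vi} to $k\ge 2$. The only cosmetic differences are that the paper obtains the tail sum in \cref{lem:aux:v} by subtracting a partial sum from the total rather than telescoping the tail directly, and phrases \cref{lem:aux:vi} as the ratio $2/k\le 1$.
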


 \begin{proof}
 See \cref{app:B}.
 \end{proof}

In \cref{prop:FISTAcone} below we provide an abstract condition that guarantees that $x^*\neq p^*$.

\begin{proposition}
\label{prop:FISTAcone}
Let $(u_k)_{k\ge 1}$ and $(d_k)_{k\ge 1}$ be the sequences  
of real numbers defined by $((1-u_k,u_k))_{k\ge 1}:= (x_k)_{k\ge 1}$
and $(d_k)_{k\ge 1}:=(u_k-u_{k-1})_{k\ge 1}$.
Suppose there exists $M\ge 3$ such that
\begin{equation} 
y_M\in U\cap V; 
\qquad d_M> 0,\qquad u^*:=u_M+\tfrac{1}{2}(M-1)d_M\in \left]0,1\right].
\end{equation}
Then 
\begin{equation} 
x^*=\bigl(1-u^*,\,u^*\bigr)\in U\cap V.
\end{equation}
In particular, 
$x^*\neq (1,0)=
p^*$.
\end{proposition}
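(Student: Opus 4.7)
The plan is to exploit the observation that once $y_k\in U\cap V$, both $P_V$ and $P_U$ act as the identity on $y_k$, collapsing the APG update to $x_{k+1}=y_k$. Since $x_k\in U$ for every $k$ and $y_k-x_k=\alpha_{k-1}(x_k-x_{k-1})\in\parl U$, we may write $x_k=(1-u_k,u_k)$ and $y_k=(1-v_k,v_k)$ with $v_k=u_k+\alpha_{k-1}d_k$. Under the provisional hypothesis $y_k\in U\cap V$ the degenerate update yields $u_{k+1}=v_k$ and
\begin{equation*}
d_{k+1}=v_k-u_k=\alpha_{k-1}d_k=\frac{k-1}{k+2}\,d_k,
\end{equation*}
which is precisely the recursion of \cref{lem:aux} starting at $a=d_M$. \cref{lem:aux}\cref{lem:aux:iv} then delivers $u_k\to u_M+\tfrac{1}{2}(M-1)d_M=u^*$, so $x_k\to(1-u^*,u^*)$.

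The main obstacle is a strong induction showing $y_k\in U\cap V$ for every $k\ge M$, since the simplification above is conditional on this membership. Membership in $U$ is automatic, so the task reduces to verifying $v_k\in[0,1]$. Monotonicity of $(v_k)$, via $v_{k+1}-v_k=\alpha_k d_{k+1}>0$, together with the hypothesis $v_M\in[0,1]$, secures the lower bound $v_k\ge v_M\ge 0$. For the upper bound, under the inductive hypothesis the finite increments $d_{M+1},\dots,d_{k+1}$ coincide with the sequence $(a_j)$ of \cref{lem:aux} with $a=d_M$; plugging \cref{lem:aux}\cref{lem:aux:ii} and \cref{lem:aux}\cref{lem:aux:iv} into
\begin{equation*}
v_{k+1}=u_M+\sum_{j=M}^{k} d_{j+1}+\alpha_k d_{k+1}
\end{equation*}
and simplifying yields the closed form
\begin{equation*}
v_{k+1}=u^*-\frac{d_M\,(M-1)\,M\,(M+1)}{2(k+2)(k+3)}<u^*\le 1,
\end{equation*}
where positivity of the correction uses $M\ge 3$ and $d_M>0$. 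This closes the induction, establishing the recursion for all $k\ge M$.

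It remains to identify $x^*$ and compare it to $p^*$. The computation above gives $x^*=(1-u^*,u^*)$, which lies in $U\cap V$ since $u^*\in(0,1]$. Combining with \cref{prop:MAP-to-10}, which identifies $p^*=(1,0)$, the strict inequality $u^*>0$ immediately yields $x^*\neq p^*$, completing the proof.
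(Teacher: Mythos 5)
Your proposal is correct and follows essentially the same route as the paper's proof: the collapse $x_{k+1}=y_k$ once $y_k\in U\cap V$, the resulting recursion $d_{k+1}=\tfrac{k-1}{k+2}\,d_k$, a strong induction to maintain $y_k\in U\cap V$ for $k\ge M$, and \cref{lem:aux} to sum the increments and identify $u^*$. The only substantive difference is how you keep $(y_k)_2=v_k$ inside $[0,1]$: you obtain the lower bound from the monotonicity $v_{k+1}-v_k=\alpha_k d_{k+1}>0$ together with $v_M=(y_M)_2\ge 0$, and the upper bound from the exact closed form $v_{k+1}=u^*-\tfrac{d_M\,(M-1)M(M+1)}{2(k+2)(k+3)}<u^*\le 1$ (which checks out against \cref{lem:aux}\cref{lem:aux:ii}\&\cref{lem:aux:iv}), whereas the paper bounds $(y_{k+1})_2\in[u_{k+1},u_{k+1}+d_{k+1}]$ and controls $u_{k+1}+d_{k+1}\le u^*$ via the tail-sum comparison \cref{lem:aux}\cref{lem:aux:vi}. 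Your variant is marginally cleaner: the lower bound via $v_k\ge v_M\ge 0$ sidesteps the paper's containment $[u_{k+1},u_{k+1}+d_{k+1}]\subseteq[u_M,u^*]\subseteq[0,1]$, which as literally written presumes $u_M\ge 0$ even though $u_M$ may be negative (e.g.\ $u_4=-\tfrac{1}{16}$ in \cref{prop:example-w2}); the correct reading there is $u_{k+1}\ge u_{M+1}=(y_M)_2\ge 0$, which is exactly what your monotonicity argument delivers.
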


\begin{proof}
See \cref{app:C}.
\end{proof}

We conclude this section with a concrete choice of $x_0$ 
to demonstrate our result.
\begin{example}
\label{prop:example-w2}
Let $x_0=(5,0)$. 
 Then 
 \begin{equation}
(1,0)=p^*\neq x^*=\Bigl(\tfrac{19}{32},\tfrac{13}{32}\Bigr).
 \end{equation}
\end{example}

\begin{proof}
See \cref{app:D}.
\end{proof}

\begin{figure}
    \centering
    \includegraphics[scale=0.55]{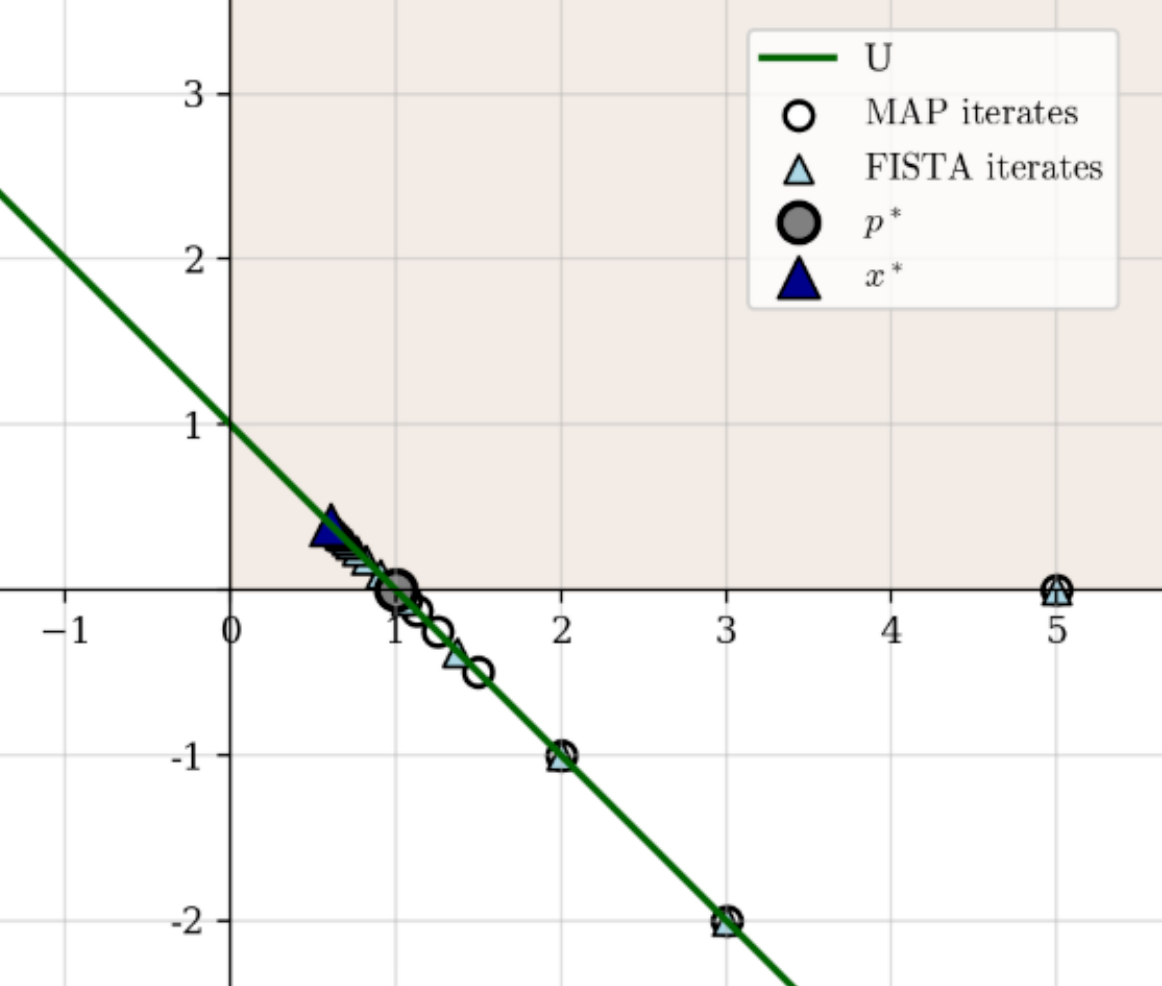}
    \caption{A \texttt{Python} plot illustrating \cref{prop:example-w2}.
    Shown are the initial point $(5,0)$, 
    the first $40 $ iterates of the MAP iterates $(p_k)_\knn$  and FISTA iterates $(x_k)_\knn$  along with the limits $p^*$
    and $x^*$.}
    \label{fig:cone_aff}
\end{figure}

\section{Numerical Experiments}
Suppose we are given a two $n \times n$ images which are similar, such as subsequent frames from a video, and one of them, say, $x^c$ is corrupted. 
A fraction of the pixels in $x^c$ are missing or blacked out. 
We only have access to a subset of the pixel values. Specifically, 
we know $x^c_{ij}$ for those indices $(i,j)$ belonging to a set 
$I \subseteq \{1,\dots,n\} \times \{1,\dots,n\}$. 
Let $p = |I|$ denote the number of known pixels. 
We formulate the objective function as 
\begin{equation}
    f(x) = \tfrac{1}{2}\|A x - b\|^2,
\end{equation}
where $b \in \RR^p$ is the vector of known pixel values, 
and $A \in \RR^{p \times n^2}$ is the sampling matrix obtained 
by selecting from the $n^2 \times n^2$ the rows of the identity matrix 
 corresponding to the known pixels.

Next, we introduce affine constraints through the discrete cosine transform (DCT)
\cite{DCT}. The DCT is closely related to the discrete Fourier transform (DFT)
but operates entirely in the real domain, expressing a discrete signal as a
linear combination of cosine waves with different frequencies. It is widely
used in signal processing \cite{dct_signal}, as well as in biometrics, medical
imaging, and image inpainting \cite{ochoa2019discrete}. The present setting is
a prototypical inpainting task. The DCT can be computed efficiently via the
fast Fourier transform (FFT) in $\mathcal{O}(n \log n)$ operations for
length-$n$ signals.

Formally, the DCT is a linear map represented by an orthogonal matrix $Q$.
Hence $Q^{\top}Q = QQ^{\top} = \Id$, and $Q^{\top}$ acts as the inverse DCT.
For images, we use the two-dimensional DCT obtained by applying the
one-dimensional DCT row-wise and then column-wise. Equivalently, the 2D-DCT
is the  Kronecker product $Q \otimes Q$ of the DCT matrix.

To form our constraints, we assume that alongside the corrupted image,
a partial discrete cosine transform (DCT) is available---for example,
the subset of coefficients corresponding to high-frequency components of the uncorrupted image.
Let $C \in \RR^{m \times n^2}$ denote the DCT submatrix consisting of
$m$ rows of the full $n^2 \times n^2$ DCT matrix corresponding to the
known frequency indices, and let $d \in \RR^m$ be the vector of the
corresponding known frequency coefficients. Together, these define the
affine constraint 
\begin{equation}
    U=\menge{x\in \RR^{n^2}}{Cx = d}.
\end{equation}
Let $x\in \RR^{n^2}$. It follows from the definition of $C$ that $CC\tran=\Id$. Therefore,
by e.g., \cite[Proposition~8.10]{BBM2023},
we have 
\begin{equation}
\label{eq:PUformula}
P_U(x) = x - C\tran(CC\tran)^{-1}(Cx - d)= x - C\tran(Cx - d).
\end{equation}

\begin{figure}[h!]
    \centering
    \subfloat[]{
        \includegraphics[width=0.3\textwidth]{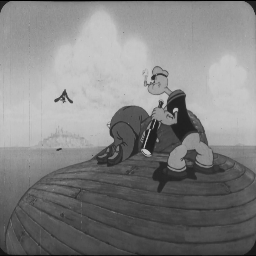}
        \label{fig:base_img}
    }
    \hfill
    \subfloat[]{
        \includegraphics[width=0.3\textwidth]{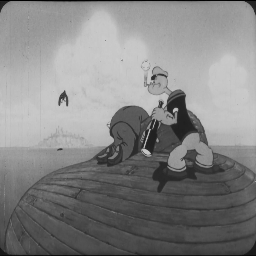}
        \label{fig:uncorr_img}
    }
    \hfill
    \subfloat[]{
        \includegraphics[width=0.3\textwidth]{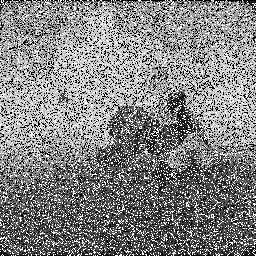}
        \label{fig:corr_img}
    }
    \caption{\cref{fig:base_img} is the preceding frame to the corrupted one we receive, \cref{fig:uncorr_img} is the uncorrupted frame (the one we try to recover), and 
\cref{fig:corr_img} is the corrupted version with $40\%$ of pixels
missing.}
    \label{fig:base_corr}
\end{figure}

For our numerical experiment,  we use two frames from the animated film ``Popeye the Sailor meets Sinbad the Sailor'' \cite{popeye}, with $n=256$, so
$x\in\RR^{n^2}=\RR^{65{,}536}$. We corrupt $40\%$ of the pixels in one of the frames, leaving
approximately $39{,}000$ known values. From these, we take $p=35{,}000$
randomly selected pixels, forming $A\in\RR^{p\times n^2}$ and
$b\in\RR^{p}$ for the objective. We additionally assume $m=2^{13}=8{,}192$
known DCT coefficients from the uncorrupted frame, giving $C\in\RR^{m\times n^2}$ and
$d\in\RR^{m}$ for the affine constraints. The matrix $A$ consists of rows
from the $65{,}536\times65{,}536$ identity matrix indexed by known pixels, while
$C$ contains the corresponding DCT rows.

In implementation, neither $A$ nor $C$ is formed explicitly; we apply
their actions and adjoints implicitly to avoid large matrix operations.

When running FISTA, we set the stepsize $1/\lip  = 1$ since $A$
consists of rows of the identity and thus satisfies $\norm{A\tran A} = 1$. The parameter sequence $(t_k)_{k \in \NN}$ was chosen to be,
\begin{equation}
    t_{0} =1\;\;\text{and}\; \; (\forall k\geq1)\; t_{k+1} = \frac{1+\sqrt{1+4t_k^2}}{2}.
\end{equation}
The stopping criterion is based on the norm of the composite gradient mapping, which
in this setting, recalling \cref{eq:PUformula},  is
\begin{equation}
\label{eq:Gx}
G(x) = x - P_U\big(x - \nabla f(x)\big)
      =(\Id-C\tran C)A\tran (Ax-b)+C\tran (Cx-d).
\end{equation}
In our numerical experiment, FISTA was terminated when 
$\norm{G(x_k)}\le 10^{-12}$.

At most $47{,}192$ measurements are used in the 
reconstruction. We deliberately avoid using all uncorrupted pixels 
and the full DCT information so that the problem remains underdetermined and admits multiple optimal solutions. In 
\cref{fig:recon_final}, we display reconstructions obtained from different initializations. All three runs 
terminated when the norm of the composite gradient mapping met the prescribed tolerance; as seen below, different 
initializations yield different solutions. We also note slight background noise around the bird in 
\cref{fig:recon_ones,fig:recon_zeros}, which arises from using information from a previous frame. Finally, 
starting from a random initialization produces a noisier final image compared to the image resulting from starting at the vector of zeros or vector of ones. 
\emph{This demonstrates the practical significance of identifying the limit of the APG scheme: in 
underdetermined settings, the initialization identifies the solution closest to the initial point.}

\begin{figure}[H]
    \centering
    \subfloat[]{
        \includegraphics[width=0.3\textwidth]{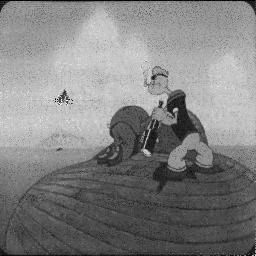}
        \label{fig:recon_ones}
    }
    \hfill
    \subfloat[]{
        \includegraphics[width=0.30\textwidth]{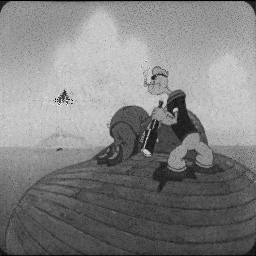}
        \label{fig:recon_zeros}
    }
    \hfill
    \subfloat[]{
        \includegraphics[width=0.3\textwidth]{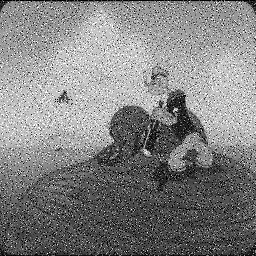}
        \label{fig:recon_rand}
    }
        \caption{\cref{fig:recon_ones} reconstruction from $x_0=(1,1, \ldots,1)$, 
        \cref{fig:recon_zeros} reconstruction from $x_0=(0,0, \ldots,0)$, and \cref{fig:recon_rand}
        reconstruction from a random $x_0\in \RR^{65{,}536}$.}
    \label{fig:recon_final}
\end{figure}

\appendix

\renewcommand\thesection{\Alph{section}}

\numberwithin{equation}{section}
\numberwithin{proposition}{section}

\crefname{appendix}{appendix}{appendices}
\Crefname{appendix}{Appendix}{Appendices}
\crefalias{section}{appendix} 

\section{Appendix A}
\label{app:0}

\begin{namedproof}{Proof of \cref{lem:key}}

\cref{lem:key:i}:
Let $x^*  \in S$.
The left inequality 
in \cref{e:ineq1} follows 
because $\mu=\min F(X)$.
Let $k \ge 1$ and observe that 
\begin{equation}
 \|z_k - x^*\|^2 =
 \|t_k y_k +(1-t_k )x_k -x^* \|^2 .
\label{eq:z-norm-eq}
\end{equation}

It follows from \cref{30.2:b},
\cref{eq:delta-def},
the convexity of $F$,
\cref{lem:29.2},
\cref{30.3a},
\cref{eq:zk-def} and \cref{eq:z-norm-eq}
that 
\begin{align*}
t_{k-1}^2 \delta_k - t_k^2 \delta_{k+1}
&\ge (t_k^2 - t_k)\delta_k - t_k^2\delta_{k+1} 
\\
&= t_k^2\!\left(\Big(1-\tfrac{1}{t_k}\Big)\delta_k - \delta_{k+1}\right) 
\\
&= t_k^2\!\left(\Big(1-\tfrac{1}{t_k}\Big)\!\big(F(x_k)-F(x^*)\big) - \big(F(x_{k+1})-F(x^*)\big)\right) 
\\
&= t_k^2\!\left(\Big(1-\tfrac{1}{t_k}\Big)F(x_k) + \tfrac{1}{t_k}F(x^*) - F(x_{k+1})\right) \\
&\ge t_k^2\!\left(F\!\left(\tfrac{1}{t_k}x^* + \Big(1-\tfrac{1}{t_k}\Big)x_k\right) - F(x_{k+1})\right)
\\
&\ge \frac{t_k^2 \lip}{2}\!\left(\Big\|\tfrac{1}{t_k}x^* + \Big(1-\tfrac{1}{t_k}\Big)x_k - x_{k+1}\Big\|^2
- \Big\|\tfrac{1}{t_k}x^* + \Big(1-\tfrac{1}{t_k}\Big)x_k - y_k\Big\|^2 \right) \\
&= \frac{\lip}{2}\!\left(\|x^* + (t_k-1)x_k - t_k x_{k+1}\|^2 - \|x^* + (t_k-1)x_k - t_k y_k\|^2\right) \\
&= \frac{\lip}{2}\!\left(\|z_{k+1} - x^*\|^2 - \|z_k - x^*\|^2\right)
.
\end{align*}

Rearranging yields
\begin{equation}
\xi_{k+1}=t_k^2 \delta_{k+1} + \frac{\lip}{2}\|z_{k+1} - x^*\|^2
\le
t_{k-1}^2 \delta_k + \frac{\lip}{2}\|z_k - x^*\|^2 =\xi_{k}.
\label{eq:master-ineq}
\end{equation}
This proves that
 \begin{equation}
\xi_{k+1}\le \xi_{k}\le \ldots \le \xi_1.
 \end{equation}
 We now turn to the right inequality in \cref{e:ineq1}. Observe that
 $\xi_1=t_0^2\big(F(x_1) - \mu\big) +\tfrac{\lip}{2} \|z_1 - x^*\|^2$.
Since $t_0=1$, $x_0=y_0$, and hence $y_1=x_1$ (by \cref{30.3b}), we have $z_1=x_1$ and thus
\begin{equation}
\xi_1=\big(F(x_1) - \mu\big) +\tfrac{\lip}{2} \|x_1 - x^*\|^2.
\end{equation}

Using \cref{lem:29.2} 
applied with $(x,y)$
replaced by $(x^*,x_0)$
and recalling $Tx_0 = Ty_0 = x_1$, we estimate
\begin{equation}
\mu - F(x_1) = F(x^*) - F(x_1)
\ge \tfrac{\lip}{2}\|x_1-x^* \|^2 - \tfrac{\lip}{2}\|x_0-x^* \|^2;
\end{equation}
equivalently,
\begin{equation}
F(x_1) - \mu
\le \frac{\lip}{2} \big(\|x_0-x^* \|^2 - \|x_1-x^* \|^2 \big).
\label{eq:309}
\end{equation}
Altogether,
we learn that 
$\xi_1\le \tfrac{\lip}{2}\norm{x_0-x^*}^2$
and \cref{e:ineq1} is verified.

\cref{lem:key:i}:
It follows from \cref{e:ineq1}
that $(\xi_k)_{k\in\NN}$
is a bounded nonincreasing sequence of real numbers 
hence it converges.

\cref{lem:key:ii}:
This follows from 
\cref{lem:key:i} and \cref{e:ineq1}
by observing that 
$(\xi_k)_{k\in\NN}$
is the sum of two nonnegative sequences, 
namely $(t_{k-1}^2 \delta_k )_{k\in\NN}$
 and $ (\frac{\lip}{2}\|z_k - x^*\|^2)_{k\in\NN}$.
 \end{namedproof}

\section{Appendix B}
\label{app:A}

\begin{namedproof}{Proof of \cref{prop:MAP-to-10}}
We proceed by induction.
Base case at $k=1$. Using \cref{eq:PVPU} we have
\begin{equation}
\label{eq:x1}
p_1=P_UP_V(x_0)=P_U(x_0)
=\Big(\tfrac{w-0+1}{2},\ \tfrac{0-w+1}{2}\Big)
=\Big(1+\tfrac{w-1}{2},\ -\,\tfrac{w-1}{2}\Big).
\end{equation}
This verifies the base case.
Suppose for some $k\ge 1$, we have 
\begin{equation}
\label{eq:xkIH}
p_k=\Big(1+\tfrac{w-1}{2^{k}},\ -\,\tfrac{w-1}{2^{k}}\Big).
\end{equation}
By \cref{eq:PVPU} and \cref{eq:xkIH} we have 
\begin{equation}
\label{eq:PVxk}
p_{k+1}=P_UP_V(p_k)=P_U\Big(1+\tfrac{w-1}{2^{k}},\ 0\Big)
=\Big(1+\tfrac{w-1}{2^{k+1}},\ -\,\tfrac{w-1}{2^{k+1}}\Big).
\end{equation}
This prove that  $(p_k)_{k\ge 1}=\big(\big(1+\tfrac{w-1}{2^{k}},\ -\,\tfrac{w-1}{2^{k}}\big)\big)_{k\ge 1}$.
The limit follows from the fact that $\tfrac{w-1}{2^{k}}\to 0$ as $k\to\infty$.
The proof is complete.
\end{namedproof}

\section{Appendix C}
\label{app:B}

\begin{namedproof}{Proof of \cref{lem:aux}}
\cref{lem:aux:i}:
Since $a_M=a\in\,]0,1[$ and $\frac{k-1}{k+2}\in\,]0,1[$ for every $k\ge M\ \ge 2$, 
we get $a_{k+1}=\frac{k-1}{k+2}a_k\in\,]0,a_k[$.
The conclusion follows by simple induction on $k\ge M$.

\cref{lem:aux:ii}:
Indeed,
\[
a_k
= a_M\prod_{j=M}^{k-1}\frac{j-1}{j+2}
= a\,\frac{\prod_{j=M}^{k-1}(j-1)}{\prod_{j=M}^{k-1}(j+2)}
= a\,\frac{(M-1)M(M+1)}{(k-1)k(k+1)}.
\]

\cref{lem:aux:iii}:
This is a direct consequence of \cref{lem:aux:ii}.

\cref{lem:aux:iv}:
Indeed, we have 
\begin{subequations}
\begin{align}
\sum_{j=M}^{k} a_{j+1}
&= a\,(M-1)M(M+1)\sum_{j=M}^{k}\frac{1}{j(j+1)(j+2)}
\\
&= \frac{a}{2} (M-1)M(M+1)
\sum_{j=M}^{k}
\Bigl(\frac{1}{j(j+1)}-\frac{1}{(j+1)(j+2)}\Bigr)
\\
&=
\frac{a}{2} (M-1)M(M+1)
\Bigl(\frac{1}{M(M+1)}-\frac{1}{(k+1)(k+2)}\Bigr)
\\
&=
\frac{a\,(M-1)}{2}\,\Bigl(1-\frac{M(M+1)}{(k+1)(k+2)}\Bigr).
\end{align}
\end{subequations}
The claim about the infinite sum follows by taking
the limit as 
$k\to\infty$.

\cref{lem:aux:v}:
If $k=M$ the conclusion follows from  \cref{lem:aux:iv}.
Now suppose $k\ge M+1$. Applying \cref{lem:aux:iv} with $k$ replaced by $k-1$, we have
\begin{subequations}
\begin{align}
   \sum_{j=k}^{\infty} a_{j+1}
&=\sum_{j=M}^{\infty} a_{j+1}-\sum_{j=M}^{k-1} a_{j+1}
\\
&=\frac{a\,(M-1)}{2}-\frac{a\,(M-1)}{2}\,\Bigl(1-\frac{M(M+1)}{k(k+1)}\Bigr)
\\
&=\frac{a\,(M-1)}{2}\cdot\frac{M(M+1)}{k(k+1)}. 
\end{align}
\end{subequations}
\cref{lem:aux:vi}:
It follows from \cref{lem:aux:ii} 
and \cref{lem:aux:v} applied with $k$ replaced by $k+1$ that  
\begin{equation}
\frac{a_{k+1}}{\sum_{j=k+1}^{\infty} a_{j+1}}
=\frac{a\,\cdot\frac{(M-1)M(M+1)}{k(k+1)(k+2)}}{a\,\cdot\frac{(M-1)M(M+1)}{2\,(k+1)(k+2)}}
=\frac{2}{k}\ \le\ 1.
\end{equation}
The proof is complete.
\end{namedproof}

\section{Appendix D}
\label{app:C}

\begin{namedproof}{Proof of \cref{prop:FISTAcone}}
By definition, $(x_k)_{k\ge 1}$ lies in $U$. Hence, $(y_k)_{k\ge 2}=((1+\alpha_{k-1})x_k-\alpha_{k-1}x_{k-1})_{k\ge 2}$ lies in $U$.
Write 
\begin{equation}
\label{eq:y-form-M}
    y_k=\bigl(1-u_k-\alpha_{k-1}d_k,\ u_k+\alpha_{k-1}d_k\bigr)\quad (k\ge 2).
\end{equation}
We prove the following. 
\textsc{Claim:}
$(\forall k\ge M)$ the following hold.
\begin{enumerate}[(A)]
\item 
\label{prop:FISTAcone:A}
$y_k\in U\cap V$.
\item 
\label{prop:FISTAcone:B}
$u_{k+1}=u_k+\alpha_{k-1}d_k$, and $d_{k+1}=\alpha_{k-1}d_k=\dfrac{k-1}{k+2}\,d_k\ (\ge 0)$.
\item 
\label{prop:FISTAcone:C}
$u_k\ \ge\ u_M,\qquad
u_k+d_k\ \le\ u_M+\tfrac{M-1}{2}\,d_M$.
\end{enumerate}
Consequently, 
$(\forall k\ge M)$, we have 
\begin{equation}
d_k=d_M\,\frac{(M-1)M(M+1)}{(k-1)k(k+1)},\qquad
\lim_{k\to\infty}u_k=u_M+\frac{M-1}{2}\,d_M=u^*\in\left[0,1\right].
\end{equation}

We proceed by strong induction on $k\ge M$.

Base case at $k=M$.
By assumption, $y_M\in U\cap V$, so $x_{M+1}=P_UP_V(y_M)=y_M\in U\cap V$.
Thus \cref{eq:y-form-M} applied with $k$
replaced by $M$ yields
\[
u_{M+1}=u_M+\alpha_{M-1} d_M,\qquad d_{M+1}=u_{M+1}-u_M=\alpha_{M-1} d_M\ge 0.
\]
Using $d_M\ge 0$ and the assumption $u_M+\tfrac{M-1}{2}\,d_M\le 1$, we have
$u_M\le 
u_M+d_M\le
u_M+\tfrac{M-1}{2}\,d_M\le 1$, verifying the base case.

Let $(a_k)_{k\ge M}$ be defined as in \cref{lem:aux} with  $a=a_M:=d_M$.
Let $k\ge M$.
Suppose that $(\forall m\in\{M,\dots,k\})$ we have 
\[
y_m\in U\cap V,\quad
d_m=a_m,\quad
u_m\ge u_M,\quad
u_m+d_m\le u_M+\tfrac{M-1}{2}d_M\ \le 1.
\]
We prove \cref{prop:FISTAcone:A}--\cref{prop:FISTAcone:C} hold for $k+1$.
Since $y_k\in U\cap V$, we have 
\begin{equation}
(1-u_{k+1},u_{k+1})=x_{k+1}=P_UP_V(y_k)=y_k\in U\cap V,
\end{equation}
and thus, recalling \cref{eq:y-form-M}, 
\begin{equation}
\label{eq:indsB-M}
u_{k+1}=u_k+\alpha_{k-1}d_k,\qquad
d_{k+1}=u_{k+1}-u_k=\alpha_{k-1}d_k=\frac{k-1}{k+2}\,d_k\ \ge 0.
\end{equation}
This verifies \cref{prop:FISTAcone:B}
at $k+1$.
The inductive hypothesis and \cref{eq:indsB-M} yield $d_k=a_k$, and by the definition of $a_{k+1}$ we get $d_{k+1}=a_{k+1}$.
Observe that, by anti-telescoping and using 
the inductive hypothesis together with \cref{eq:indsB-M}, we have
\begin{equation}
\label{eq:indsB:i-M}
u_{k+1}-u_M=\sum_{r=M}^{k} d_{r+1}\ge 0.
\end{equation}
On the one hand, \cref{eq:indsB:i-M} yields $u_{k+1}\ge u_M$.
On the other hand, \cref{eq:indsB:i-M},
\cref{lem:aux}\cref{lem:aux:v}\&\cref{lem:aux:iv}
imply 
\begin{subequations}
\begin{align}
u_{k+1}+d_{k+1}
&=u_M+\sum_{r=M}^{k} d_{r+1}+d_{k+1}
\\
&=u_M+\sum_{r=M}^{k} a_{r+1}+a_{k+1}
\\
&\le u_M+\sum_{r=M}^{k} a_{r+1}
+\sum_{r=k+1}^{\infty} a_{r+1}
\\
&=u_M+\sum_{r=M}^{\infty} a_{r+1}
= u_M+\frac{M-1}{2}\,a_M
\\
&
= u_M+\frac{M-1}{2}\,d_M\ \le\ 1.
\end{align}
\end{subequations}
 This verifies \cref{prop:FISTAcone:C}
 at $k+1$.
Finally, using $y_{k+1}=(1-u_{k+1}-\alpha_k d_{k+1},\,u_{k+1}+\alpha_k d_{k+1})$ and $\alpha_k\in\,]0,1[$, we obtain
\[
(y_{k+1})_2
=u_{k+1}+\alpha_k d_{k+1}
\in[\,u_{k+1},\,u_{k+1}+d_{k+1}\,]\subseteq\Big[\,u_M,\,u_M+\tfrac{M-1}{2}d_M\,\Big]\subseteq[0,1],
\]
so $y_{k+1}\in U\cap V$, which is \cref{prop:FISTAcone:A} at $k+1$.
Therefore, 
$(\forall k\ge M)$
\cref{prop:FISTAcone:A}--\cref{prop:FISTAcone:C} hold. 

The “Consequently” part follows from applying \cref{lem:aux}\cref{lem:aux:ii}–\cref{lem:aux:iv} with $(a_k)_{k\ge M}$ replaced by $(d_k)_{k\ge M}$,
in view of \cref{eq:indsB:i-M}, yielding
\begin{equation}
d_k
= d_M\,\frac{(M-1)M(M+1)}{(k-1)k(k+1)},\qquad
\lim_{k\to \infty} u_k
= u_M+\frac{M-1}{2}\,d_M=u^*,
\end{equation}
and hence $\lim_{k\to\infty}x_k=(1-u^*,u^*)\in U\cap V$.
This completes the proof.
\end{namedproof}

\section{Appendix E}
\begin{namedproof}{Proof of \cref{prop:example-w2}}
\label{app:D}
The claim 
$\lim_{k\to \infty} p_k=(1,0)$
follows from
\cref{prop:MAP-to-10} applied with $w=5$.
We now turn to $\lim_{k\to \infty} x_k$.
We claim that $M=4$ satisfies the assumptions of \cref{prop:FISTAcone}. Indeed, 
\begin{equation}
x_1=(3,-2),\quad
x_2=(2,-1),\quad
x_3=\tfrac{1}{8}(11,-3),\quad
x_4=\tfrac{1}{16}(17,-1),
\end{equation}
 and so
\begin{equation}
d_4=u_4-u_3=-\tfrac{1}{16}-\Bigl(-\tfrac{3}{8}\Bigr)=\tfrac{5}{16}>0.
\end{equation}
With $\alpha_3=\tfrac{1}{2}\in\,]0,1[$ and $x_4-x_3=\bigl(-\tfrac{5}{16},\tfrac{5}{16}\bigr)$, we obtain
\begin{equation}
y_4=x_4+\alpha_3(x_4-x_3)
=\Bigl(\tfrac{29}{32},\tfrac{3}{32}\Bigr)\in U\cap V,
\end{equation}
establishing $y_4\in U\cap V$.
It remains to verify that $u_M+\tfrac{M-1}{2}\,d_M\le 1$ at $M=4$. Indeed,
\begin{equation}
u_4+\tfrac{4-1}{2}\,d_4
= -\tfrac{1}{16} + \tfrac{3}{2}\cdot\tfrac{5}{16}
= \tfrac{13}{32}\ \le\ 1.
\end{equation}
Thus all assumptions of \Cref{prop:FISTAcone} hold with $M=4$.
Consequently, by \Cref{prop:FISTAcone}, for every $k\ge 4$,
\begin{equation}
d_k=\frac{75}{4}\cdot\frac{1}{(k-1)k(k+1)},\qquad
u_k=\frac{13}{32}-\frac{75}{8\,k(k+1)}\to \frac{13}{32},
\end{equation}
and
\begin{equation}
x_k=(1-u_k,u_k)
\ \longrightarrow\ \Bigl(\tfrac{19}{32},\tfrac{13}{32}\Bigr)\in U\cap V.
\end{equation}
The proof is complete.
\end{namedproof}
                                        
\end{document}